\DeclarePairedDelimiter\floor{\lfloor}{\rfloor}
\tikzset{
    invisible/.style={opacity=0},
    visible on/.style={alt={#1{}{invisible}}},
    alt/.code args={<#1>#2#3}{%
      \alt<#1>{\pgfkeysalso{#2}}{\pgfkeysalso{#3}}%
  }
}
\newtheorem{theorem}{Theorem}
\newtheorem{proposition}{Proposition}[section]
\newtheorem{corollary}[proposition]{Corollary}
\newtheorem{lemma}[proposition]{Lemma}
\theoremstyle{break} 
\newenvironment{proof}%
{{\par\noindent \bf Proof. \nobreak}}%
{\nobreak \removelastskip \nobreak \hfill $\Box$ \medbreak}
\newenvironment{remark}{\par \medskip \noindent {\bf Remark. }\nobreak}{\par \medskip}
\def\paragraph#1{{\bf #1\ }}
\newcommand{\expo}{\mathrm{e}}
\newcommand{\Var}{\mathrm{Var}}
\newcommand{\dd}{\mathrm{d}}
\newcommand{\EE}{\mathbb{E}}
\newcommand{\NN}{\mathrm{N}}
\def\law{\operatorname{Law}}
\title{Fractal opinions among interacting agents} 
\author{Fei Cao \footnotemark[1] \and Roberto Cortez \footnotemark[2]}
\begin{document}
\maketitle

\footnotetext[1]{University of Massachusetts Amherst - Department of Mathematics and Statistics, Amherst, MA 01003, USA}
\footnotetext[2]{Universidad Andrés Bello - Departamento de Matemáticas, Santiago, Chile}

\begin{abstract}
We investigate an opinion model consisting of a large group of interacting agents, whose opinions are represented as numbers in $[-1,1]$. At each update time, two random agents are selected, and the opinion of the first agent is updated based on the opinion of the second (the ``persuader''). We derive the mean-field kinetic equation describing the large population limit of the model, and we provide several quantitative results establishing convergence to the unique equilibrium distribution. Surprisingly, in some range of the model parameters, the support of the equilibrium distribution exhibits a fractal structure. This provides a new mathematical description for the so-called opinion fragmentation phenomenon.
\end{abstract}

\noindent {\bf Key words: Agent-based model; Fractals; Interacting particle systems; Opinion fragmentation; Mean-field; Sociophysics; Bernoulli convolution}

\tableofcontents

\section{Introduction}\label{sec:sec1}
\setcounter{equation}{0}

\subsection{Model description}

In this work we study a model of opinion dynamics among interacting agents, both for the system of finitely many agents and its large population limit. In the finite case, there are $N$ agents (labelled from $1$ to $N$) located on a complete graph, where each agent $i$ is characterized uniquely by a number $X^{i,N} \in [-1,1]$, representing his/her general \emph{opinion} or \emph{political standpoint} on a given topic. Hence, one can interpret $-1$ and $+1$ as extreme {\it left-wing} and extreme {\it right-ring}, respectively. Each individual interacts with all other individuals at rate one. The dynamic evolves according to the following rule: at each random time (generated by a Poisson clock with rate $N/2$), a pair of distinct agents $(i,j) \in \{1,\cdots,N\}^2 \setminus \{i=j\}$ is picked independently and uniformly at random, then we update the opinion level of agent $i$ according to
\begin{equation}
\label{eq:dynamics}
X_t^{i,N} = \begin{cases}
X_{t^-}^{i,N} + \mu_+\cdot\left(1 - X_{t^-}^{i,N} \right) &~~ \textrm{with probability}~~ \frac 12 + \frac{X_{t^-}^{j,N}}{2}, \\
X_{t^-}^{i,N} - \mu_{-}\cdot\left(1 + X_{t^-}^{i,N} \right) &~~ \textrm{with probability}~~ \frac 12 - \frac{X_{t^-}^{j,N}}{2},
\end{cases}
\end{equation}
where $\mu_{-} \in (0,1]$ and $\mu_+ \in (0,1]$ are two parameters modeling the velocity/tendency of the individuals towards $-1$ and $+1$, respectively. This rule can be seen as agent $j$ (the ``persuader'') stating an opinion, which can be either $-1$ or $+1$ with probabilities depending linearly on his/her current political standpoint (i.e., given by $(1 - X_{t^-}^{j,N})/2$ and $(1 + X_{t^-}^{j,N})/2$, respectively), and then agent $i$ updates his/her standpoint by moving a proportion ($\mu_-$ or $\mu_+$) toward the stated opinion.

Without loss of generality and by the obvious symmetry we will assume that
\[
	\mu_{-} \leq \mu_{+}
\]
unless otherwise stated. We also emphasize that if we put $X_0^{i,N} \in \{-1,1\}$ for all $1\leq i\leq N$ initially and set $\mu_{-} = \mu_+ = 1$, then the aforementioned model boils down to the classical voter model \cite{clifford_model_1973,holley_ergodic_1975} (also known as the Moran model in evolutionary biology and population genetics literature \cite{etheridge_some_2011}).

The literature on opinion models is vast \cite{cao_k_2021,cao_iterative_2024,castellano_statistical_2009,jabin_clustering_2014,sen_sociophysics_2014,weber_deterministic_2019} and we do not aim to provide a comprehensive list of reviews on opinion dynamics and the related sociophysics models. Broadly speaking, the Deffuant (bounded confidence) model \cite{deffuant_mixing_2000}, the Krause-Hegselmann model \cite{hegselmann_opinion_2002}, and the Sznajd model \cite{sznajd_opinion_2000} and their variants are among the most popular models studied in the literature on opinion dynamics. One important inspiration behind the present work is the very recent research by Nicolas Lanchier and Max Mercer \cite{lanchier_limiting_2024}, in which the model under our investigation in this manuscript was first proposed (although in a discrete time setting) using a different set of terminologies. To be precise, they term $X^{i,N} \in [-1,1]$ as the \emph{kindness} level of agent $i$ and interpret the rule \eqref{eq:dynamics} as the update of agent $i$'s kindness after a kind or unkind interaction with another agent $j$, whence in their settings the model parameters $\mu_{-} \in [0,1]$ and $\mu_+ \in [0,1]$ measure the sensitivity of individuals to a unkind/kind interaction, respectively.

Denote ${\bf X}_t^N = (X_t^{1,N},\ldots,X_t^{N,N})$ and assume that $\mu_- < \mu_+$. The main result proved in \cite{lanchier_limiting_2024} using standard tools from probability theory and Markov chains \cite{lanchier_stochastic_2017,liggett_interacting_1985} can be briefly summarized into the following (informal) statement: suppose that $X_0^{i,N} \sim \textrm{Uniform}[-1,1]$ for all $1\leq i\leq N$ and they are independent, then
\[
	\mathbb{P}\left(A_1 \cup A_{-1}\right) = 1,
\]
where $A_1$ and $A_{-1}$ represent the event that ${\bf X}_t^N \xrightarrow{t\to \infty} (1,1,\ldots,1)$ and ${\bf X}_t^N \xrightarrow{t\to \infty} (-1,-1,\ldots,-1)$, respectively. Moreover, under the large population limit $N \to \infty$, the authors of \cite{lanchier_limiting_2024} also proved that
\[
\mathbb{P}\left(\textrm{event $A_{-1}$ happens before event $A_1$ happen}\right) = 0.
\]

One of our main goals in this manuscript is to analyze the model using a kinetic approach, i.e., we will first take the large population limit $N \to \infty$ to obtain a Boltzmann-type partial differential equation (PDE), then investigate the long time behavior of the resulting mean-field PDE as $t \to \infty$. As for the underlying $N$-agent system (running in continuous time), we resort to a stochastic differential equation (SDE) framework (see for instance \cite{cortez_quantitative_2016,cortez_uniform_2016}) using Poisson point measures to demonstrate various analytical results of the model, including the surprising emergence of certain fractal structures in the equilibrium distribution of opinions

\subsection{Main results and plan of the paper}

The rest of this paper is organized as follows. Section \ref{sec:sec2} is devoted to the introduction/derivation of the rigorous mean-field limit of the interacting random opinion dynamics \eqref{eq:dynamics} as the number of agents $N$ tends to infinity. We deduce both the SDE for the limit process and the PDE describing the evolution of its marginal distributions; the computation of the first moment along the solution to the mean-field PDE can be carried out readily.

We investigate the large time asymptotic behavior of this mean-field Boltzmann-type equation in Section \ref{sec:sec3}. In particular, when $\mu_{-} < \mu_+$, we prove that the solution of the mean-field dynamics converges to a Dirac delta at $+1$ exponentially fast in the Wasserstein metric of order $2$, see Theorem \ref{thm:1}. On the other hand, when $\mu_{-} = \mu_+$, the equilibrium distribution is no longer a Dirac delta, and we prove convergence to it with explicit exponential decay rates under various metrics, see Theorems \ref{thm:2} and \ref{thm:3}.

In Section \ref{sec:sec4} we study the properties of the stationary distribution in the special case in which the two model parameters $\mu_{\pm}$ are identical, denoted by $\mu$, and the mean opinion is zero. We characterize the equilibrium through an equation in distribution and provide some semi-explicit formulas for it. We demonstrate a remarkable phenomenon: when $\mu > 1/2$, the equilibrium of opinions is uniformly distributed on a fractal, Cantor-like set, see Proposition \ref{prop:Cantor-like_equili}. This surprising result provides a mathematically rigorous explanation for the emergence of the so-called \emph{fragmentation} or \emph{polarization} of (public) opinions observed in a number of recent reports \cite{meng_disagreement_2023,su_robust_2019}. Loosely speaking, opinion fragmentation entails the ``clusterization'' of opinions; that is, the tendency of the agents' opinions to accumulate rather than spread continuously over $[-1,1]$. For instance, in the bounded confidence model \cite{deffuant_mixing_2000}, the equilibrium distribution is a sum of one or more Dirac masses, whose exact locations depend on the initial distribution. In contrast, for the model of the present article, there exists a unique equilibrium attracting all initial distributions, whose support exhibits ``holes'' in the continuous opinion spectrum, when $\mu>1/2$. In other words: in the long run, it is impossible for agents to hold opinion values in certain intervals, given by the complement of the aforementioned Cantor-like set. This provides an alternative mathematical description of the fragmentation phenomenon. As for the case $\mu \in (0,1/2]$, the problem of finding the explicit form of the equilibrium distribution is much harder in general. We give explicit identifications (in closed form) for two specific values of $\mu$. We also provide some numerical simulations displaying the shape of the equilibrium distribution for some specific values of $\mu$. We then prove a quantitative convergence guarantee for the standardized equilibrium distribution towards a standard normal distribution $\mathcal{N}(0,1)$ when $\mu \to 0$, see Theorem \ref{prop:Gaussian_approximation}.

Thus, one of our main contributions is that our work bridges interacting multi-agent systems and kinetic-type equations with opinion models and with the so-called \emph{Bernoulli convolution} literature (to be explained later). This is highlighted in Section \ref{sec:sec5}, where we also sketch several possible directions for future research endeavors. Lastly, we provide the proof of some of our results in the Appendix.

\section{Derivation of the mean-field limit}\label{sec:sec2}
\setcounter{equation}{0}

The \emph{mean-field limit} of the system ${\bf X}_t^N$ represents the behaviour of any agent $X^{i,N}$ in the large population limit $N \to \infty$. It can refer to either a process $(Z_t)_{t\geq 0}$ or its collection of marginal distributions $(\rho_t \coloneqq \law(Z_t))_{t\geq 0}$. The former is described by a jump SDE, while the latter is described by an integro-differential PDE.

Formally, to go from the $N$-agent system to the mean-field limit, in the interaction rule \eqref{eq:dynamics} one replaces $X_t^{j,N}$ by an independent sample of $\rho_t$. More specifically, the process $(Z_t)_{t\geq 0}$ solves the jump SDE
\begin{equation}
	\label{eq:SDE}
	\dd Z_t = \int_{-1}^1 \int_0^1 \left[  \mu_{+} (1-Z_{t^-}) \mathbbm{1} \{ u < \tfrac{1+x}{2}\}
					- \mu_{-} (1+Z_{t^-}) \mathbbm{1} \{ u \geq \tfrac{1+x}{2}\}
				\right] \tilde{\mathcal{P}}(\dd t, \dd u, \dd x),
\end{equation}
where $\tilde{\mathcal{P}}(\dd t, \dd u, \dd x)$ is a Poisson point measure on $[0,\infty) \times [0,1] \times [-1,1]$ with intensity $\dd t \, \dd u \, \rho_t (\dd x)$. The fact that this intensity depends on the law of the process makes the equation nonlinear.

The rigorous convergence as $N \to \infty$ of a finite system towards its mean-field limit is known as \emph{propagation of chaos} \cite{sznitman_topics_1991}. It has been studied extensively for a huge variety of systems (ranging from social-economical sciences to life and physical sciences \cite{cao_derivation_2021,cao_entropy_2021,cao_explicit_2021,cao_interacting_2022,cao_uncovering_2022,cao_uniform_2024,matthes_steady_2008,naldi_mathematical_2010}), especially in the context of kinetic Boltzmann-type equations \cite{degond_macroscopic_2004}. For the model of the present article, propagation of chaos can be easily obtained as a consequence of well-established results. For instance, the proof of the following result is a straightforward application of \cite[Theorem 3.1]{graham_meleard_1997}:

\begin{proposition}
	The SDE \eqref{eq:SDE} admits a unique (in law) solution. Moreover, assuming that $X_0^{1,N}, \ldots, X_0^{N,N}$ are i.i.d.\ and $\rho_0$-distributed, then we have propagation of chaos: for any fixed $k$ and $T>0$,
	\[
	\lim_{N \to \infty} \law\left( (X_t^{1,N}, \ldots,X_t^{k,N})_{t \in [0,T]} \right)
	= \law\left((Z_t)_{t \in [0,T]} \right)^{\otimes k}
	\qquad
	\text{weakly.}
	\]
\end{proposition}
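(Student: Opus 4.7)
The plan is to invoke \cite[Theorem 3.1]{graham_meleard_1997}, which provides a unified framework for existence, uniqueness, and propagation of chaos for nonlinear jump SDEs of the form \eqref{eq:SDE}. The key preparatory step is to verify boundedness and Lipschitz-type conditions on the jump amplitude. Writing the integrand in \eqref{eq:SDE} as
\[
a(z,u,x) = \mu_+(1-z)\mathbbm{1}\{u < \tfrac{1+x}{2}\} - \mu_-(1+z)\mathbbm{1}\{u \geq \tfrac{1+x}{2}\}
\]
on $[-1,1]\times[0,1]\times[-1,1]$, one checks at once that $|a(z,u,x)|\leq 2$, that $z+a(z,u,x) \in [-1,1]$ (so the state space is preserved), and that $a(\cdot,u,x)$ is Lipschitz in $z$ with constant $\max(\mu_+,\mu_-)\leq 1$, uniformly in $(u,x)$.

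The nonlinearity enters through the dependence of the intensity $\dd t\,\dd u\,\rho_t(\dd x)$ on $\rho_t = \law(Z_t)$. For any $z\in[-1,1]$ and any two probability measures $\rho,\tilde\rho$ on $[-1,1]$, coupling through an optimal transport plan $\Pi$ yields a bound of the form
\[
\int \int_0^1 |a(z,u,x) - a(z,u,\tilde x)|\,\dd u\,\Pi(\dd x,\dd\tilde x) \leq C\, W_1(\rho,\tilde\rho),
\]
since $u\mapsto \mathbbm{1}\{u<(1+x)/2\}$ and $u\mapsto\mathbbm{1}\{u<(1+\tilde x)/2\}$ differ on a set of Lebesgue measure $|x-\tilde x|/2$, while the factors $\mu_\pm(1\mp z)$ are bounded by $2$. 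Equipped with these two Lipschitz estimates, existence and uniqueness in law for \eqref{eq:SDE} follow from a standard fixed-point argument on $C([0,T],\mathcal{P}([-1,1]))$ endowed with a Wasserstein-type metric on path laws: the map $\rho_\cdot \mapsto \law(Z^{\rho_\cdot}_\cdot)$ becomes a contraction on a sufficiently small interval and is iterated to cover $[0,T]$ for arbitrary $T>0$.

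For the propagation of chaos statement, the usual coupling argument proceeds by constructing on a single probability space $N$ independent copies $(\bar Z^{i})_{i=1}^N$ of the limit process driven by independent Poisson point measures, while realizing the $N$-agent system $(X^{i,N})$ from the same Poisson measures but with intensities involving the empirical measure $\frac{1}{N}\sum_{j\neq i}\delta_{X^{j,N}_t}$ in place of $\rho_t$. A Gronwall-type inequality applied to $\EE|X^{i,N}_t - \bar Z^{i}_t|$, combined with the $O(N^{-1/2})$ rate of empirical-measure approximation on the compact state space, yields convergence of the $k$-marginals as $N\to\infty$. The main technical subtlety---coupling the Poisson measures driving both systems simultaneously despite their distinct, law-dependent intensities---is precisely what \cite{graham_meleard_1997} handles in an abstract setting, and the Lipschitz properties of $a$ verified above are exactly the input their machinery requires, so the proof reduces to a verification exercise.
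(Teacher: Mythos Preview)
Your proposal is correct and follows exactly the paper's approach: the paper simply states that the result is a straightforward application of \cite[Theorem 3.1]{graham_meleard_1997}, without spelling out any of the verification details. You have in fact gone further than the paper by explicitly checking the boundedness, state-space invariance, and Lipschitz properties of the jump amplitude $a(z,u,x)$ that feed into the Graham--M\'el\'eard machinery.
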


From the SDE \eqref{eq:SDE}, we can easily obtain the following PDE in weak form for $(\rho_t)_{t \geq 0}$, which is nothing but the Kolmogorov backward equation of the process $Z_t$: for any test function $\varphi$,
\begin{equation}
\label{eq:PDE_weak}
\begin{split}
&\frac{\dd}{\dd t} \int_{-1}^1 \varphi(x) \rho_t(\dd x) \\
&= \int_{-1}^1 \left[ \varphi(x + \mu_+ (1-x)) \frac{1+m_t}{2}
+ \varphi(x - \mu_- (1+x)) \frac{1-m_t}{2}
- \varphi(x) \right] \rho_t(\dd x), \\
&\coloneqq \int_{-1}^1 Q_t[\varphi](x)\,\rho_t(\dd x)
\end{split}
\end{equation}
where we introduced the operator $Q_t[\cdot]$ defined via
\[
Q_t[\varphi](x) \coloneqq \varphi(x + \mu_+ (1-x))\, \frac{1+m_t}{2} + \varphi(x - \mu_- (1+x))\,\frac{1-m_t}{2} - \varphi(x)
\]
for all $(x,t) \in [-1,1] \times [0,\infty)$, and $m_t$ is defined by
\[
	m_t \coloneqq \EE[Z_t] = \int_{-1}^1 x \rho_t(\dd x).
\]
Moreover, $m_t$ can be computed explicitly: taking $\varphi(x) = x$ in \eqref{eq:PDE_weak} yields
\begin{align*}
\frac{\dd}{\dd t} m_t
&= (m_t + \mu_+ (1-m_t)) \frac{1+m_t}{2}
+ (m_t - \mu_- (1+m_t)) \frac{1-m_t}{2}
- m_t \\
&= -\frac{\mu_+ - \mu_{-}}{2}\,(m_t^2 - 1),
\end{align*}
which solves to
\begin{equation}
\label{eq:mt}
m_t = 1 - \frac{2}{1+\frac{1+m_0}{1-m_0}\,\expo^{(\mu_+ - \mu_{-}) t}}.
\end{equation}
Since $m_t$ is explicit, we can write the following alternative SDE for $Z_t$, equivalent (in law) to \eqref{eq:SDE}:
\begin{equation}
	\label{eq:SDE_linear}
	\dd Z_t = \int_0^1 \left[  \mu_{+} (1-Z_{t^-}) \mathbbm{1} \{ u < \tfrac{1+m_t}{2}\}
	- \mu_{-} (1+Z_{t^-}) \mathbbm{1} \{ u \geq \tfrac{1+m_t}{2}\}
	\right] \mathcal{P}(\dd t, \dd u),
\end{equation}
where $\mathcal{P}(\dd t, \dd u)$ is a Poisson point measure on $[0,\infty) \times [0,1]$ with intensity $\dd t \, \dd u$. Notice that the PDE \eqref{eq:PDE_weak} and the SDE \eqref{eq:SDE_linear} are \emph{linear} and \emph{time-inhomogeneous}, rather than nonlinear.

\begin{remark}
One can formally obtain an equation for the density of $\rho_t$ (assuming it exists), which we denote $\rho_t(x)$, slightly abusing notation. Indeed, noticing that
\begin{equation*}
\int_{-1}^1 \varphi\left(x+\mu_+\cdot(1-x)\right)\,\rho_t(x)\,\dd x = \int_{2\mu_+-1}^1 \varphi(y)\,\rho_t\left(\frac{y-\mu_+}{1-\mu_+}\right)\,\frac{\dd y}{1-\mu_+}
\end{equation*}
and
\begin{equation*}
\int_{-1}^1 \varphi\left(x-\mu_{-}\cdot(1+x)\right)\,\rho_t(x)\,\dd x = \int_{-1}^{1-2\mu_{-}} \varphi(y)\,\rho_t\left(\frac{y+\mu_{-}}{1-\mu_{-}}\right)\,\frac{\dd y}{1-\mu_{-}},
\end{equation*}
we deduce that the evolution of $\rho_t$ is governed by the following Boltzmann-type PDE (to be understood in the weak sense):
\begin{equation}
\label{eq:PDE}
\partial_t \rho_t(x) = Q^*[\rho_t](x)
\end{equation}
in which
\begin{equation}
\label{eq:operator_Q*}
\begin{aligned}
Q^*[\rho_t](x) &= \frac{1+m_t}{2}\,\frac{\mathbbm{1}\{x> 2\mu_+ -1\}}{1-\mu_+}\,\rho_t\left(\frac{x-\mu_+}{1-\mu_+}\right) \\
&\quad + \frac{1-m_t}{2}\,\frac{\mathbbm{1}\{x\leq 1-2\mu_{-}\}}{1-\mu_{-}}\,\rho_t\left(\frac{x+\mu_{-}}{1-\mu_{-}}\right) - \rho_t(x).
\end{aligned}
\end{equation}
\end{remark}

\section{Large time analysis of the mean-field dynamics}
\label{sec:sec3}
\setcounter{equation}{0}

We now turn to the asymptotic analysis of the solution $\rho_t$ to \eqref{eq:PDE_weak} as $t \to \infty$. We start with the following useful computation. Denote
\[
q_t
\coloneqq \EE[Z_t^2]
= \int_{-1}^1 x^2\,\rho_t(\dd x).
\]

\begin{lemma}
\label{lem:qt}
Let
\begin{align*}\label{eq:notations}
\alpha_t
&\coloneqq \frac{(1-\mu_+)^2 - (1-\mu_{-})^2 }{2} m_t
+ \frac{ (1-\mu_+)^2 + (1-\mu_{-})^2}{2} - 1 \\
\beta_t
&\coloneqq \frac{1+m_t}{2}\left(\mu^2_+ + 2(1-\mu_+)\mu_+ m_t \right) + \frac{1-m_t}{2}\left(\mu_-^2 - 2(1-\mu_{-})\mu_{-} m_t \right).
\end{align*}
Call $I_t \coloneqq \expo^{-\int_0^t \alpha_s \,\dd s}$. Then \begin{equation}
\label{eq:2ndmoment}
q_t
= \frac{q_0}{I_t}
+ \frac{1}{I_t} \int_0^t \beta_s\, I_s\, \dd s.
\end{equation}
\end{lemma}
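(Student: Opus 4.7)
The plan is to use the weak form \eqref{eq:PDE_weak} with the test function $\varphi(x)=x^2$ to obtain a linear inhomogeneous ODE for $q_t$, and then solve it by the integrating factor method. First I would expand the two push-forward brackets,
\begin{align*}
(x+\mu_+(1-x))^2 &= (1-\mu_+)^2 x^2 + 2(1-\mu_+)\mu_+\,x + \mu_+^2, \\
(x-\mu_-(1+x))^2 &= (1-\mu_-)^2 x^2 - 2(1-\mu_-)\mu_-\,x + \mu_-^2,
\end{align*}
substitute them into $Q_t[\varphi]$, and integrate against $\rho_t$. Since $\int x^2 \rho_t(\dd x)=q_t$ and $\int x\,\rho_t(\dd x)=m_t$, every resulting integral is an explicit affine function of the unknown $q_t$ with coefficients depending only on $m_t$ and the parameters $\mu_\pm$.

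Using the elementary identity $\tfrac{1+m_t}{2}a + \tfrac{1-m_t}{2}b = \tfrac{a+b}{2} + m_t\,\tfrac{a-b}{2}$, the coefficient of $q_t$ (coming from the $x^2$ contributions minus the subtracted $-\varphi(x)$ term) matches exactly $\alpha_t$. The remaining terms, obtained by combining the degree-one contributions (which pick up a factor $m_t$ after integration) with the $x$-independent constants, reduce after a short rearrangement to the asymmetric form of $\beta_t$ given in the statement. I therefore arrive at the linear inhomogeneous ODE
\[
\dot q_t = \alpha_t\, q_t + \beta_t.
\]

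To solve it, I would multiply by the integrating factor $I_t = \expo^{-\int_0^t \alpha_s \,\dd s}$, which satisfies $I_0 = 1$ and $\dot I_t = -\alpha_t I_t$, so that
\[
\tfrac{\dd}{\dd t}\bigl(I_t q_t\bigr) = I_t\bigl(\dot q_t - \alpha_t q_t\bigr) = I_t\,\beta_t.
\]
Integrating from $0$ to $t$ then gives $I_t q_t = q_0 + \int_0^t I_s\,\beta_s\,\dd s$, which is exactly \eqref{eq:2ndmoment}.

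The only slightly delicate step is the algebraic bookkeeping in expanding $Q_t[x^2]$: one must verify that the linear-in-$x$ contributions, once integrated against $\rho_t$ (and hence weighted by $m_t$), combine with the constant-in-$x$ contributions to produce precisely $\beta_t$ as defined. No genuine analytic difficulty arises, because the first moment $m_t$ is already known in closed form through \eqref{eq:mt}, so the ODE for $q_t$ closes at order two and is driven by an explicit time-dependent source, making the integrating-factor solution unambiguous.
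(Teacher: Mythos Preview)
Your proposal is correct and follows essentially the same approach as the paper: take $\varphi(x)=x^2$ in \eqref{eq:PDE_weak}, expand the two squares to obtain the linear ODE $\dot q_t = \alpha_t q_t + \beta_t$, and solve via the integrating factor $I_t$. The paper's proof is terser (it simply writes the ODE and says ``solving this ODE explicitly''), but your more detailed bookkeeping and the explicit integrating-factor computation are exactly what is intended.
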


\begin{proof}
Taking $\varphi(x) = x^2$ in \eqref{eq:PDE_weak} gives
\begin{align*}
\frac{\dd}{\dd t} q_t
&= \left[(1-\mu_+)^2 q_t + 2 (1-\mu_+)\mu_+ m_t + \mu_+^2 \right] \frac{1+m_t}{2} \\
& \qquad  {} + \left[(1-\mu_-)^2 q_t - 2 (1-\mu_-)\mu_- m_t + \mu_-^2 \right] \frac{1-m_t}{2} - q_t \\
&= \alpha_t q_t + \beta_t .
\end{align*}
Solving this ODE explicitly leads us to the expression \eqref{eq:2ndmoment} for $q_t$.
\end{proof}

We now tackle the simpler case where $\mu_- < \mu_+$. Denote $W_p(\cdot,\cdot)$ to be the $p$-Wasserstein distance between probability measures on $[-1,1]$.

\begin{theorem}
	\label{thm:1}
	Assume $\mu_- < \mu_+$ and that $\rho_0$ is not the Dirac mass at $-1$ (or equivalently that $m_0 \neq -1$). Then,
	\[
	m_t \xrightarrow{t\to \infty} 1
	\qquad \text{and} \qquad
	\Var[\rho_t] \coloneqq q_t - m_t^2 \xrightarrow{t\to \infty} 0
	\]
	exponentially fast. Consequently, $\rho_t$ converges to the Dirac mass at $1$ exponentially fast in $W_2$. Moreover, the same is true for $W_1$, with the following explicit estimate:
	\[
	W_1( \rho_t , \delta_1)
	\leq 2 \frac{1-m_0}{1+m_0} \expo^{-(\mu_+ - \mu_{-}) t}.
	\]
\end{theorem}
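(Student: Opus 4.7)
The plan is to bypass the explicit second-moment formula in Lemma \ref{lem:qt} and exploit the fact that the process takes values in the compact interval $[-1,1]$, which collapses the analysis onto the first moment $m_t$ — whose decay is already known explicitly from \eqref{eq:mt}.

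First, I would handle the $W_1$ bound as a direct calculation. Since $\rho_t$ is supported on $[-1,1]$, the quantity $1-x$ is nonnegative on the support, and the optimal transport cost toward the Dirac at $1$ collapses to
\[
W_1(\rho_t,\delta_1) = \int_{-1}^1 (1-x)\,\rho_t(\dd x) = 1 - m_t.
\]
From \eqref{eq:mt}, writing $1 - m_t = 2/\bigl(1 + \tfrac{1+m_0}{1-m_0}\expo^{(\mu_+-\mu_-)t}\bigr)$ and bounding the denominator from below by the exponential term alone, I would obtain
\[
1 - m_t \leq 2\,\frac{1-m_0}{1+m_0}\,\expo^{-(\mu_+-\mu_-)t},
\]
which yields both $m_t \to 1$ exponentially fast and the claimed explicit estimate on $W_1$.

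Second, I would control the variance by the same quantity using a support argument: since $1 - Z_t \in [0,2]$ almost surely, we have $(1-Z_t)^2 \leq 2(1-Z_t)$, and because the variance minimizes $\EE[(Z_t - a)^2]$ over $a \in \R$,
\[
\Var[\rho_t] \leq \EE\bigl[(1-Z_t)^2\bigr] \leq 2\,\EE[1 - Z_t] = 2(1 - m_t),
\]
so $\Var[\rho_t]$ inherits the exponential decay. Third, for the $W_2$ convergence I would simply observe
\[
W_2^2(\rho_t,\delta_1) = \int_{-1}^1 (1-x)^2\,\rho_t(\dd x) = (1-m_t)^2 + \Var[\rho_t],
\]
and invoke the previous two steps to conclude exponential decay at rate at least $(\mu_+-\mu_-)/2$.

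There is no real obstacle here: the entire argument hinges on the elementary observation that on $[-1,1]$ the quadratic cost toward the endpoint $1$ is dominated by the linear cost, so everything reduces to the explicit formula for $m_t$. Lemma \ref{lem:qt} is not needed for this theorem and will presumably be used in the more delicate regime $\mu_- = \mu_+$ covered by Theorems \ref{thm:2} and \ref{thm:3}.
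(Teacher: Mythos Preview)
Your argument is correct and is genuinely more elementary than the paper's. The paper does invoke Lemma~\ref{lem:qt}: it shows that $\alpha_t \to \mu_+^2 - 2\mu_+ < 0$ and $\beta_t \to -(\mu_+^2 - 2\mu_+)$, so that $I_t \to \infty$, and then applies L'H\^opital's rule to the integral formula \eqref{eq:2ndmoment} to conclude $q_t \to 1$; the variance decay and the $W_2$ bound follow from $W_2^2(\rho_t,\delta_1) \leq \Var[\rho_t] + (1-m_t)^2$ exactly as you write. Your route replaces all of this with the single pointwise inequality $(1-Z_t)^2 \leq 2(1-Z_t)$ on $[-1,1]$, which immediately gives $\Var[\rho_t] \leq 2(1-m_t)$ and hence an explicit variance decay rate $\mu_+ - \mu_-$ that the paper's argument does not make explicit. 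The $W_1$ computation is identical in both proofs (the paper writes $\leq$ where you correctly note it is an equality, since one marginal is a Dirac). One side remark to correct: your guess that Lemma~\ref{lem:qt} is reserved for Theorems~\ref{thm:2} and~\ref{thm:3} is off --- those theorems use Fourier contraction and a coupling argument respectively, and Lemma~\ref{lem:qt} is in fact used \emph{only} for Theorem~\ref{thm:1}, so your simplification makes it redundant for the paper as written.
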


\begin{proof}
The claim that $m_t \to 1$ exponentially fast follows directly from \eqref{eq:mt}. Consequently, using the notation of Lemma \ref{lem:qt}, we see that
\[
\lim_{t\to \infty} \alpha_t
= - \lim_{t\to \infty} \beta_t
= \mu_+^2 - 2\mu_+ < 0,
\]
which implies that $I(t) = \expo^{-\int_0^t \alpha_s \dd s} \to \infty$. From \eqref{eq:2ndmoment} and L'H\^opital's rule, we deduce that
\[
\lim_{t \to \infty} q_t
= \lim_{t \to \infty} \frac{\beta_t I_t}{\frac{\dd I_t}{\dd t}}
= -\lim_{t \to \infty} \frac{\beta_t}{\alpha_t}
= 1.
\]
Thus, $\lim_{t\to \infty} \Var[\rho_t] = \lim_{t\to \infty} (q_t - m_t^2) = 1 - 1^2 = 0$ and it is readily seen that the convergence is exponentially fast since both $m_t$ and $q_t$ converge to $1$ exponentially fast. This immediately implies convergence in $W_2$ exponentially fast, because of the following estimates:
\[
W^2_2(\rho_t ,\delta_1)
\leq \EE |Z_t-1|^2
= q_t - 2\,m_t + 1
= \Var[\rho_t] + (1-m_t)^2.
\]
Finally, since $|Z_t| \leq 1$, we have $W_1( \rho_t , \delta_1) \leq \EE|1-Z_t| = 1 - m_t$, whence we deduce from \eqref{eq:mt} that
\[
W_1( \rho_t , \delta_1)
= \frac{2}{1+\frac{1+m_0}{1-m_0}\,\expo^{(\mu_+ - \mu_{-}) t}}
\leq 2\,\frac{1-m_0}{1+m_0}\, \expo^{-(\mu_+ - \mu_{-}) t}.
\]
\end{proof}

This concludes with the case $\mu_{-} < \mu_{+}$. From now on, we will assume that
\begin{equation}
	\label{Assumption:A1}
	\mu_- = \mu_+ \coloneqq \mu \in (0,1).
\end{equation}
The rest of this section is devoted to the large time analysis of the mean-field PDE under this assumption, which is trickier. First, notice that \eqref{Assumption:A1} and \eqref{eq:mt} imply that the mean opinion is conserved, i.e, $m_t \equiv m_0$ for all $t \geq 0$. Also, in this case the stationary distribution, denoted by $\rho_\infty$ will no longer be the Dirac mass at $1$; moreover, $\rho_\infty$  will actually depend on $\mu$. We leave the detailed analysis of $\rho_\infty$ for later, see Section \ref{sec:sec4}; for now, we only need to know that it exists.

We now establish a quantitative convergence guarantee for the solution of the mean-field PDE \eqref{eq:PDE_weak} under the assumption \eqref{Assumption:A1}. For this purpose, we first give a quick review of the so-called Fourier-based distance of order $s \geq 1$ (also known as Toscani distance) \cite{carrillo_contractive_2007}, defined by
\begin{equation}
\label{eq:Toscani_distance}
d_s(f,g) = \sup\limits_{\xi \in \mathbb{R}\setminus \{0\}} \frac{|\hat{f}(\xi)-\hat{g}(\xi)|}{|\xi|^s},
\end{equation}
where $f \in \mathcal{P}(\mathbb R)$ and $g \in \mathcal{P}(\mathbb R)$ are probability laws on $\mathbb R$, and
\[
\hat{f}(\xi)
\coloneqq \int_{\mathbb R} \expo^{-i\,x\,\xi}\,f(\dd x)
\] represents the Fourier transform of $f$. It is a well-known fact that (see \cite{carrillo_contractive_2007}) that $d_s(f,g) < \infty$ as long as $f$ and $g$ share the same moments up to order $\floor{s}$, where $\floor{s}$ denotes the integer part of $s$. We remark here that Fourier-based distances \eqref{eq:Toscani_distance} are introduced in a series of works \cite{carrillo_contractive_2007,gabetta_metrics_1995,goudon_fourier_2002} for the study of the problem of convergence to equilibrium for the spatially homogenous Boltzmann equation originated from statistical physics. These Fourier-based distances have also witnessed fruitful applications to novel sub-branches of traditional statistical physics, such as econophysics and sociophysics \cite{cao_equivalence_2024,during_boltzmann_2008,matthes_steady_2008,naldi_mathematical_2010}.

We now prove a convergence result in terms of these Fourier-based distances. To this end, note that taking $\varphi(x) =  \expo^{-i\,x\,\xi}$ in \eqref{eq:PDE_weak}, we obtain the following Fourier transformed version of the PDE:
\begin{align}
	\frac{\dd}{\dd t} \hat{\rho}_t(\xi)
	&= \int_{-1}^1 \expo^{- i \xi (1-\mu) x}
	\left[ \expo^{-i \xi \mu} \frac{1+m_0}{2} + e^{i \xi \mu} \frac{1-m_0}{2} \right] \rho_t(dx)
	- \hat{\rho}_t(\xi)
	\notag \\
	&= \left[\cos(\mu\,\xi)-i\,m_0\,\sin(\mu\,\xi)\right]\,\hat{\rho}_t((1-\mu)\,\xi) - \hat{\rho}_t(\xi).
	\label{eq:Fourier_PDE}
\end{align}

\begin{theorem}[Contraction in Fourier-based distances] \label{thm:2}
Assume that $\mu_- = \mu_+ \coloneqq \mu \in (0,1)$. Let $(\rho_t)_{t \geq 0}$ and $(\varrho_t)_{t \geq 0}$ be the solutions to \eqref{eq:PDE_weak} corresponding to initial datum $\rho_0$ and $\varrho_0$, respectively, with common first moment $m_0 \in (-1,1)$. Then, for each $s \geq 1$ we have for all $t\geq 0$:
\begin{equation}
	\label{eq:expo_decay_pairwise}
	d_s\left(\rho_t, \varrho_t \right)
	\leq d_s\left(\rho_0, \varrho_0\right)
	\,\expo^{-\left(1-(1-\mu)^s\right)\,t}.
\end{equation}
Consequently, for all $t\geq 0$,
\begin{equation}
\label{eq:expo_decay_Toscani}
d_s\left(\rho_t,\rho_\infty\right)
\leq d_s\left(\rho_0,\rho_\infty\right)\, \expo^{-\left(1-(1-\mu)^s\right)\,t}.
\end{equation}
\end{theorem}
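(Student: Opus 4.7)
The plan is to exploit the linearity and time-homogeneity of the Fourier equation \eqref{eq:Fourier_PDE}, which holds because under \eqref{Assumption:A1} the first moment is conserved ($m_t \equiv m_0$). Let $(\rho_t)_{t\ge 0}$ and $(\varrho_t)_{t\ge 0}$ be two solutions with the same initial mean $m_0$, and define $\hat h_t(\xi) := \hat\rho_t(\xi) - \hat\varrho_t(\xi)$. Subtracting the two copies of \eqref{eq:Fourier_PDE} yields
\[
\frac{\dd}{\dd t}\hat h_t(\xi) = K(\xi)\,\hat h_t\bigl((1-\mu)\xi\bigr) - \hat h_t(\xi),
\qquad K(\xi) := \cos(\mu\xi) - i\,m_0\sin(\mu\xi).
\]
The crucial elementary observation is that $|K(\xi)|^2 = \cos^2(\mu\xi) + m_0^2\sin^2(\mu\xi) \le 1$ since $|m_0| \le 1$.

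Next I would integrate this linear ODE (at fixed $\xi$) by Duhamel's formula with integrating factor $\expo^{t}$, obtaining
\[
\hat h_t(\xi) = \expo^{-t}\hat h_0(\xi) + \int_0^t \expo^{-(t-\tau)} K(\xi)\,\hat h_\tau\bigl((1-\mu)\xi\bigr)\,\dd\tau.
\]
Dividing by $|\xi|^s$, using $|K(\xi)| \le 1$, and rewriting the inner term as
\[
\frac{|\hat h_\tau((1-\mu)\xi)|}{|\xi|^s} = (1-\mu)^s\,\frac{|\hat h_\tau((1-\mu)\xi)|}{|(1-\mu)\xi|^s} \le (1-\mu)^s\,d_s(\rho_\tau,\varrho_\tau),
\]
and taking $\sup$ over $\xi \neq 0$, I get the closed integral inequality
\[
d_s(\rho_t,\varrho_t) \le \expo^{-t}d_s(\rho_0,\varrho_0) + (1-\mu)^s \int_0^t \expo^{-(t-\tau)} d_s(\rho_\tau,\varrho_\tau)\,\dd\tau.
\]
Setting $\phi(t) := \expo^{t}d_s(\rho_t,\varrho_t)$ turns this into $\phi(t) \le \phi(0) + (1-\mu)^s\int_0^t \phi(\tau)\,\dd\tau$, so Gronwall's lemma yields $\phi(t) \le \phi(0)\expo^{(1-\mu)^s t}$, which is precisely \eqref{eq:expo_decay_pairwise}.

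For \eqref{eq:expo_decay_Toscani} I would simply apply \eqref{eq:expo_decay_pairwise} with $\varrho_t \equiv \rho_\infty$, after noting that conservation of the mean forces $\rho_\infty$ to have first moment $m_0$ as well, so the hypothesis on common first moments is satisfied. The main technical caveat — rather than a real obstacle — is that the Gronwall step implicitly requires $d_s(\rho_\tau,\varrho_\tau)$ to be locally bounded; this is not entirely automatic when $s > 2$, but for the probability measures at hand (compactly supported on $[-1,1]$) all moments exist, and a short bootstrap on $[0,T]$ shows finiteness is propagated. If $d_s(\rho_0,\varrho_0) = \infty$ the bound is vacuous and nothing is lost.
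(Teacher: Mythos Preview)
Your proof is correct and follows essentially the same route as the paper: subtract the two copies of \eqref{eq:Fourier_PDE}, use the key bound $|K(\xi)|\le 1$, integrate with the factor $\expo^{t}$ (Duhamel), scale by $|\xi|^{-s}$ and rewrite $(1-\mu)^s|(1-\mu)\xi|^{-s}$, take the supremum, and close with Gronwall. The only cosmetic difference is that the paper divides by $|\xi|^s$ before integrating rather than after; your added remark on local boundedness of $d_s$ is a reasonable caveat that the paper leaves implicit.
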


\begin{proof} The proof is inspired from Theorem 2.2 in \cite{matthes_steady_2008}. For fixed $\xi \neq 0$, from \eqref{eq:Fourier_PDE} we have:
\begin{align*}
&\frac{\dd}{\dd t}\,\frac{\hat{\rho}_t(\xi) - \hat{\varrho}_t(\xi)}{|\xi|^s} + \frac{\hat{\rho}_t(\xi) - \hat{\varrho}_t(\xi)}{|\xi|^s} \\
&= \left[\cos(\mu\,\xi)-i\,m_0\,\sin(\mu\,\xi)\right]\,\frac{\hat{\rho}_t((1-\mu)\xi) - \hat{\varrho}_t((1-\mu)\xi)}{|\xi|^s} \\
&= \left[\cos(\mu\,\xi)-i\,m_0\,\sin(\mu\,\xi)\right]\,(1-\mu)^s\,\frac{\hat{\rho}_t((1-\mu)\xi) - \hat{\varrho}_t((1-\mu)\xi)}{|(1-\mu)\xi|^s}.
\end{align*}
Therefore, denoting $h_t(\xi) = \frac{\hat{\rho}_t(\xi) - \hat{\varrho}_t(\xi)}{|\xi|^s}$ and employing the elementary observation that $|\cos(\mu\,\xi)-i\,m_0\,\sin(\mu\,\xi)| \leq 1$, we deduce that
\[
\left\vert \frac{\dd}{\dd t}h_t(\xi) + h_t(\xi) \right\vert
\leq (1-\mu)^s\,\Vert h_t \Vert_\infty.
\]
Multiplying by $\expo^t$ and integrating, yields
\[
\left\vert \expo^t h_t(\xi) - h_0(\xi) \right\vert
\leq \int_0^t \left\vert \frac{\dd}{\dd r} \left(\expo^r h_r(\xi) \right) \right\vert \dd r
\leq (1-\mu)^s \int_0^t \expo^r \Vert h_r \Vert_\infty \dd r.
\]
Moving $\vert h_0(\xi)|$ to the right hand side and taking supremum over $\xi \neq 0$, we obtain
\[
\expo^t \Vert h_t \Vert_\infty
\leq \Vert h_0 \Vert_\infty + (1-\mu)^s \int_0^t \expo^r \Vert h_r \Vert_\infty \dd r.
\]
From Gronwall's lemma, we deduce that $\expo^t \Vert h_t \Vert_\infty
\leq \Vert h_0 \Vert_\infty \expo^{(1-\mu)^s t}$.
The advertised exponential decay result follows immediately since $\Vert h_t \Vert_\infty = d_s(\rho_t,\varrho_t)$.
\end{proof}

According to a classical result \cite{carrillo_contractive_2007} linking the Fourier-based distances $d_s$ and the Wasserstein distance $W_1$, from \eqref{eq:expo_decay_Toscani} we deduce the existence of a constant $C > 0$ depending only $s$ such that
\[
W_1\left(\rho_t,\rho_\infty\right)
\leq C\,[d_s\left(\rho_0,\rho_\infty\right) ]^{\frac{s-1}{s(2s-1)}} \,\expo^{-\frac{s-1}{s(2s-1)}\,\left(1-(1-\mu)^s\right)\,t}.
\]
Thus the solution of \eqref{eq:PDE_weak} converges exponentially fast to its equilibrium $\rho_\infty$ under the $W_1$ metric with rate $\frac{s-1}{s(2s-1)} (1-(1-\mu)^s)< \frac{\mu}{2}$. However, the following Theorem improves this rate to $\mu$, by working directly with the SDE description of the model using a coupling approach. To this end, notice that when $\mu_- = \mu_+ \coloneqq \mu \in (0,1)$, the SDE \eqref{eq:SDE_linear} becomes
\begin{equation}
	\label{eq:SDE_mu}
	\dd Z_t = \int_0^1 \mu ([ \mathbbm{1} \{ u < p\} - \mathbbm{1} \{ u \geq p\} ] - Z_{t^-})
	\mathcal{P}(\dd t, \dd u),
\end{equation}
where $p = (1+m_0)/2$. Since the intensity of $\mathcal{P}(\dd t, \dd u)$ is $\dd t \, \dd u$, we see that $\mathcal{B} \coloneqq \mathbbm{1} \{ u < p\} - \mathbbm{1} \{ u \geq p\}$ is a sample of a Rademacher random variable with parameter $p$, i.e., its distribution is $\beta = p\,\delta_1 + (1-p)\,\delta_{-1}$. Consequently, \eqref{eq:SDE_mu} can be written as
\begin{equation}
\label{eq:SDE_limit_rigorous_special_case}
\dd Z_t = \int_{\{-1,1\}} \mu\,\left(b - Z_{t^-}\right)\,\mathcal{Q}(\dd t,\dd b),
\end{equation}
where $\mathcal{Q}(\dd t,\dd b)$ denotes a Poisson point measure on $[0,\infty) \times \{-1,1\}$ with intensity $\dd t \, \beta(\dd b)$.

\begin{theorem}[Contraction in $W_1$]\label{thm:3}
Assume that $\mu_- = \mu_+ \coloneqq \mu \in (0,1)$. Let $(\rho_t)_{t \geq 0}$ and $(\varrho_t)_{t \geq 0}$ be the solutions to \eqref{eq:PDE_weak} corresponding to initial datum $\rho_0$ and $\varrho_0$, respectively, with common first moment $m_0 \in (-1,1)$. Then, we have for all $t\geq 0$:
\begin{equation}
	\label{eq:contraction_W1}
	W_1\left(\rho_t, \varrho_t \right)
	\leq W_1\left(\rho_0, \varrho_0\right)
	\,\expo^{-\mu t}.
\end{equation}
Consequently, for all $t\geq 0$,
\[
	W_1\left(\rho_t,\rho_\infty\right)
	\leq W_1\left(\rho_0,\rho_\infty\right)\, \expo^{-\mu \,t}.
\]
\end{theorem}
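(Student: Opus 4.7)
The plan is to use a synchronous coupling: I will construct two processes $(Z_t)_{t\ge 0}$ and $(\tilde Z_t)_{t\ge 0}$ on the same probability space, both driven by the \emph{same} Poisson point measure $\mathcal{Q}(\dd t,\dd b)$ of intensity $\dd t\,\beta(\dd b)$ appearing in \eqref{eq:SDE_limit_rigorous_special_case}, with initial values $Z_0\sim\rho_0$ and $\tilde Z_0\sim\varrho_0$ chosen to realize an optimal $W_1$-coupling, so that $\EE|Z_0-\tilde Z_0|=W_1(\rho_0,\varrho_0)$. This is legitimate because the common-mean assumption $m_0\in(-1,1)$ makes the Rademacher law $\beta=p\,\delta_1+(1-p)\,\delta_{-1}$ with $p=(1+m_0)/2$ the same for both SDEs, so the two processes can indeed be driven by a single copy of $\mathcal{Q}$.

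The key observation is that each jump acts as an affine contraction with deterministic factor $1-\mu$. Indeed, when $\mathcal{Q}$ fires at time $t$ with mark $b$,
\[
Z_t=(1-\mu)Z_{t^-}+\mu\,b,\qquad \tilde Z_t=(1-\mu)\tilde Z_{t^-}+\mu\,b,
\]
so that $Z_t-\tilde Z_t=(1-\mu)(Z_{t^-}-\tilde Z_{t^-})$; the mark $b$ cancels out. Between jumps, both processes are constant. Writing $D_t\coloneqq|Z_t-\tilde Z_t|$, each jump therefore multiplies $D_t$ by $1-\mu\in(0,1)$, and jumps occur at the rate-one Poisson times of $N_t\coloneqq \mathcal{Q}([0,t]\times\{-1,1\})$. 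Consequently,
\[
D_t=D_0\,(1-\mu)^{N_t}.
\]

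Taking expectations and using the moment generating function $\EE[(1-\mu)^{N_t}]=\expo^{t((1-\mu)-1)}=\expo^{-\mu t}$ of the Poisson law, I obtain
\[
\EE[D_t]=\EE[D_0]\,\expo^{-\mu t}=W_1(\rho_0,\varrho_0)\,\expo^{-\mu t}.
\]
Since $(Z_t,\tilde Z_t)$ provides an admissible coupling of $(\rho_t,\varrho_t)$, the dual definition of the Wasserstein distance yields $W_1(\rho_t,\varrho_t)\le \EE[D_t]$, which is precisely \eqref{eq:contraction_W1}. The stated corollary follows at once by choosing $\varrho_0=\rho_\infty$: since $\rho_\infty$ is invariant, $\varrho_t=\rho_\infty$ for all $t\ge 0$, and both marginals share the first moment $m_0$ (this is where conservation of the mean under assumption \eqref{Assumption:A1} is used to make sure $\rho_\infty$ has mean $m_0$, hence the hypothesis of the theorem applies).

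There is no serious obstacle: the only subtle point is checking that the same Poisson measure can be used for both processes, which reduces to verifying that the driving Rademacher law $\beta$ depends only on $m_0$ (which is preserved) and not on the full law. The rest is a direct computation once one notices that the pathwise identity $Z_t-\tilde Z_t=(1-\mu)(Z_{t^-}-\tilde Z_{t^-})$ kills the Poisson mark.
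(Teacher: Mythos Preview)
Your proof is correct and uses the same synchronous coupling as the paper: two copies of \eqref{eq:SDE_limit_rigorous_special_case} driven by the same Poisson point measure $\mathcal{Q}$, starting from an optimal $W_1$-coupling of the initial laws. The only difference is computational: the paper derives the integral equation $h_t-h_s=-\mu\int_s^t h_r\,\dd r$ for $h_t=\EE|Z_t-\tilde Z_t|$ by integrating against $\mathcal{Q}$, whereas you observe the cleaner pathwise identity $D_t=D_0(1-\mu)^{N_t}$ and then take expectations via the Poisson moment generating function---both routes give $\EE|Z_t-\tilde Z_t|=W_1(\rho_0,\varrho_0)\,\expo^{-\mu t}$.
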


\begin{proof}
We use a coupling argument. Let $Z_t$ and $\tilde{Z}_t$ be the strong solutions to \eqref{eq:SDE_limit_rigorous_special_case} using exactly the same Poisson point measure $\mathcal{Q}$, and starting from initial conditions $Z_0 \sim \rho_0$ and $\tilde{Z}_0 \sim \varrho_0$ which are optimally coupled, that is, $\mathbb{E}\vert Z_0 - \tilde{Z}_0\vert = W_1(\rho_0 , \varrho_0)$. Let $h_t = \mathbb{E}\vert Z_t - \tilde{Z}_t \vert$. Then, for $0 \leq s \leq t$ we have
\begin{align*}
	&h_t - h_s \\
	&= \mathbb{E}\int_s^t  \int_{\{-1,1\}} \left[\left\vert\left\{Z_{r^-} + \mu (b-Z_{r^-}) \right\} -\{\tilde{Z}_{r^-} + \mu (b-\tilde{Z}_{r^-}) \}\right\vert - \left\vert Z_{r^-}-\tilde{Z}_{r^-}\right\vert\right] \mathcal{Q}(\dd r,\dd b) \\
	&= \mathbb{E}\int_s^t \left[\left\lvert (1-\mu)\,Z_{r}-(1-\mu)\,\tilde{Z}_{r}\right\lvert - \left\lvert Z_{r}-\tilde{Z}_{r}\right\lvert\right]\,\dd r \\
	&= -\mu\,\int_s^t h_r \,\dd r.
\end{align*}
Thus $h_t = h_0\,\expo^{-\mu t}$. Since $W_1$ is a coupling distance, we have $W_1(\rho_t, \varrho_t) \leq \mathbb{E}\vert Z_t - \tilde{Z}_t \vert = h_t$. The desired bound follows.
\end{proof}

\section{Stationary distribution of opinions}
\label{sec:sec4}
\setcounter{equation}{0}

We now turn our attention to the study of the stationary distribution $\rho_\infty$ of \eqref{eq:PDE_weak} in the case $\mu_{-} = \mu_{+} = \mu \in (0,1)$. We start by proving that it exists. Even though this can be easily achieved through a contraction argument, we can describe it more explicitly as follows. From \eqref{eq:SDE_limit_rigorous_special_case}, we see that a random variable $Z \sim \rho_\infty$ must solve the equation in distribution
\begin{equation}
\label{eq:characterization_of_equilibrium}
Z
\stackrel{\dd}{=} Z + \mu (\mathcal{B}-Z)
= (1-\mu)Z + \mu \mathcal{B},
\end{equation}
where $\mathcal{B}$ is a Rademacher distribution with parameter $p = (1+m_0)/2$, independent of $Z$, and $\stackrel{\dd}{=}$ stands for equality in the sense of distribution. To find a solution to \eqref{eq:characterization_of_equilibrium}, it is natural to consider the iteration $Z_{n+1} \coloneqq (1-\mu) Z_n + \mu \mathcal{B}_n$, starting from, say, $Z_0 \equiv 0$, where $(\mathcal{B}_n)_{n \in \NN}$ are i.i.d.\ copies of $\mathcal{B}$. This gives
\begin{equation}
\label{eq:Zn}
Z_n
= \mu \sum_{k=0}^{n-1} (1-\mu)^k \mathcal{B}_{n-1-k}.
\end{equation}
Now, one could study the limit of $\law(Z_n)$ as $n\to \infty$, which would certainly yield a distribution that solves \eqref{eq:characterization_of_equilibrium}. However, by reversing the indices of the $\mathcal{B}_k$'s in \eqref{eq:Zn}, we obtain a variable with the same law as $Z_n$, such that the sequence converges almost surely as $n \to \infty$. More specifically:

\begin{lemma}\label{lem:RV_characterization}
	With the previous notation, let
	\begin{equation}
		\label{eq:Zinfty}
		Z_\infty
		\coloneqq \mu \sum_{n=0}^\infty (1-\mu)^n \mathcal{B}_n.
	\end{equation}
	Then $Z_\infty$ is well defined, and it is a solution to \eqref{eq:characterization_of_equilibrium}. Consequently, $\rho_\infty \coloneqq \law(Z_\infty)$ is the unique stationary solution of \eqref{eq:PDE_weak} in the case $\mu_{-} = \mu_{+} = \mu \in (0,1)$. Moreover,
	\[
	\EE[Z_\infty] = m_0,
	\qquad
	\Var[Z_\infty] = \frac{\mu}{2-\mu} (1-m_0^2).
	\]
\end{lemma}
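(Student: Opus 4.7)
The plan is to establish the four assertions in turn: existence of $Z_\infty$, the fixed-point identity \eqref{eq:characterization_of_equilibrium}, uniqueness of the stationary distribution, and the explicit moment formulas.

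First, since $|\mathcal{B}_n|\leq 1$ almost surely and $1-\mu\in(0,1)$, the geometric tail bound shows that the series \eqref{eq:Zinfty} converges absolutely a.s., so $Z_\infty$ is well defined with $|Z_\infty|\leq 1$. To verify \eqref{eq:characterization_of_equilibrium}, I would split off the $n=0$ term and factor $(1-\mu)$ out of the remainder:
\[
Z_\infty \,=\, \mu\,\mathcal{B}_0 \,+\, (1-\mu)\cdot \mu\sum_{n=0}^\infty (1-\mu)^n\, \mathcal{B}_{n+1}.
\]
Because $(\mathcal{B}_{n+1})_{n\geq 0}$ is an i.i.d.\ sequence with the same common Rademacher law as $(\mathcal{B}_n)_{n\geq 0}$, and is independent of $\mathcal{B}_0$, the tail series has the same law as $Z_\infty$ and is independent of $\mathcal{B}_0$, yielding $Z_\infty\stackrel{\dd}{=}(1-\mu)Z_\infty+\mu\mathcal{B}$ as desired.

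For uniqueness of $\rho_\infty$, the key observation is that any random variable $Z$ satisfying \eqref{eq:characterization_of_equilibrium} automatically satisfies $\EE[Z]=m_0$: taking expectations in \eqref{eq:characterization_of_equilibrium} yields $\EE[Z]=(1-\mu)\EE[Z]+\mu m_0$, which rearranges to $\EE[Z]=m_0$. Consequently, any two stationary distributions $\rho_\infty$ and $\tilde{\rho}_\infty$ share the same first moment $m_0$ (consistently with the conservation of the mean under \eqref{Assumption:A1}), so Theorem \ref{thm:3} applies and gives $W_1(\rho_\infty,\tilde{\rho}_\infty)\leq W_1(\rho_\infty,\tilde{\rho}_\infty)\,\expo^{-\mu t}$ for every $t\geq 0$; letting $t\to\infty$ forces $\rho_\infty=\tilde{\rho}_\infty$.

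For the moments, I would justify termwise expectation by dominated convergence (using $|Z_\infty|\leq 1$), then invoke $\EE[\mathcal{B}_n]=2p-1=m_0$, $\Var(\mathcal{B}_n)=1-m_0^2$ (the latter since $\mathcal{B}_n^2\equiv 1$), and mutual independence of the $\mathcal{B}_n$'s to reduce $\EE[Z_\infty]$ and $\Var(Z_\infty)$ to two elementary geometric series with ratios $1-\mu$ and $(1-\mu)^2$, respectively. There is no serious obstacle: the only mildly subtle point is the uniqueness step, where one must notice that the mean of any candidate stationary distribution is already pinned down by the fixed-point equation, making Theorem \ref{thm:3} applicable without any \emph{a priori} matching-of-moments assumption.
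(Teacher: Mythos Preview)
Your proposal is correct and follows essentially the same approach as the paper. The minor differences are cosmetic: the paper verifies \eqref{eq:characterization_of_equilibrium} by starting from $(1-\mu)Z_\infty+\mu\mathcal{B}$ and relabeling indices to recover $Z_\infty$, whereas you split off the $n=0$ term of $Z_\infty$ first (both arguments rest on the same shift-invariance of the i.i.d.\ sequence); and the paper computes the moments directly from the fixed-point identity \eqref{eq:characterization_of_equilibrium} rather than from the series, which is equally elementary. Your extra observation that the fixed-point equation forces $\EE[Z]=m_0$ is a welcome clarification that the paper leaves implicit when invoking the contraction estimates for uniqueness.
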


\begin{proof}
Since the $\mathcal{B}_n$'s are bounded and $1-\mu \in (0,1)$, the partial sums $\mu \sum_{k=0}^n (1-\mu)^k \mathcal{B}_k$ converge almost surely, thus $Z_\infty$ is well defined. Moreover, if $\mathcal{B}$ is an independent copy of the $\mathcal{B}_n$'s, then
\[
(1-\mu)Z_\infty + \mu \mathcal{B}
= \mu \sum_{n=0}^\infty (1-\mu)^{n+1} \mathcal{B}_n + \mu \mathcal{B} \\
= \mu \sum_{n=0}^\infty (1-\mu)^n \tilde{\mathcal{B}}_n
\stackrel{\dd}{=} Z_\infty,
\]
where $\tilde{\mathcal{B}}_0 \coloneqq \mathcal{B}$ and $\tilde{\mathcal{B}}_n \coloneqq \mathcal{B}_{n-1}$ for $n \geq 1$. Thus, $Z_\infty$ solves \eqref{eq:characterization_of_equilibrium}, which implies that $\rho_\infty \coloneqq \law(Z_\infty)$ is a stationary solution of \eqref{eq:PDE_weak}. Uniqueness of $\rho_\infty$ is a consequence of any of our contraction estimates \eqref{eq:expo_decay_pairwise} or \eqref{eq:contraction_W1}. The expressions for $\EE[Z_\infty]$ and $\Var[Z_\infty]$ follow from \eqref{eq:characterization_of_equilibrium} after a straightforward computation, which we omit.
\end{proof}

Thus, \eqref{eq:characterization_of_equilibrium} describes a two-parameter family of distributions: for each $\mu$ and $m_0$, there exists a unique solution $\rho_\infty \coloneqq \law(Z_\infty)$. From now on, for the ease of study and presentation, we restrict ourselves further (unless otherwise stated) to the special case where
\begin{equation}
	\label{Assumption:A2}
	m_0 = \int_{-1}^1 x \rho_\infty(dx) = 0.
\end{equation}
We remark that in this case, the infinite (random) series $\sum_n (1-\mu)^n \mathcal{B}_n = Z_\infty / \mu$ is termed the \emph{Bernoulli convolution}, which has been studied extensively \cite{erdos_family_1939,solomyak_random_1995,varju_recent_2016,varju_absolute_2019,wintner_convergent_1935}. The following characterization of $\rho_\infty$ is well known in this literature; here we provide a different proof using the PDE \eqref{eq:Fourier_PDE}:

\begin{corollary}\label{cor:integral_representation}
	Assume that $\mu_- = \mu_+ \coloneqq \mu \in (0,1)$ and that $m_0 = 0$. Then
	\begin{equation}
		\label{eq:Fourier_representation}
		\hat{\rho}_\infty(\xi) = \prod_{n=0}^\infty \cos\left(\mu\,(1-\mu)^n\,\xi\right).
	\end{equation}
	Consequently, if $\rho_\infty$ admits a continuous density, then \begin{equation}
		\label{eq:inverse_Fourier_representation}
		\rho_\infty(x) = \frac{1}{2\pi}\,\int_\mathbb{R} \prod_{n=0}^\infty \cos\left(\mu\,(1-\mu)^{n}\,\xi\right)\,\expo^{i\,x\,\xi}\,\dd \xi.
	\end{equation}
\end{corollary}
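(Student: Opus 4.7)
The natural plan is to exploit the Fourier-transformed PDE \eqref{eq:Fourier_PDE}. The stationary distribution $\rho_\infty$ must satisfy $\tfrac{\dd}{\dd t}\hat{\rho}_\infty \equiv 0$, so specializing \eqref{eq:Fourier_PDE} to $m_0 = 0$ yields the functional equation
\[
\hat{\rho}_\infty(\xi) = \cos(\mu\,\xi)\,\hat{\rho}_\infty((1-\mu)\xi),
\qquad \xi \in \mathbb{R}.
\]
This is the key identity driving the proof.

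The next step is to iterate this relation. For each integer $N \geq 1$, a straightforward induction yields
\[
\hat{\rho}_\infty(\xi)
= \left(\prod_{n=0}^{N-1} \cos\left(\mu\,(1-\mu)^n\,\xi\right)\right) \hat{\rho}_\infty\!\left((1-\mu)^N\xi\right).
\]
Since $\rho_\infty$ is supported on the bounded interval $[-1,1]$ (as a consequence of Lemma \ref{lem:RV_characterization}, which shows $Z_\infty$ is the almost-sure limit of bounded partial sums), its characteristic function $\hat{\rho}_\infty$ is continuous on $\mathbb{R}$ and satisfies $\hat{\rho}_\infty(0) = 1$. As $N \to \infty$, we have $(1-\mu)^N \xi \to 0$ for any fixed $\xi$, so the remainder factor $\hat{\rho}_\infty((1-\mu)^N \xi)$ tends to $1$. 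This forces
\[
\hat{\rho}_\infty(\xi) = \prod_{n=0}^\infty \cos\left(\mu\,(1-\mu)^n\,\xi\right),
\]
provided the infinite product converges. Convergence is routine: using $|1-\cos y| \leq y^2/2$, the tail terms are bounded by $\tfrac{1}{2}\mu^2(1-\mu)^{2n}\xi^2$, whose sum is a convergent geometric series, so the product converges absolutely and uniformly on compacts.

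Finally, for \eqref{eq:inverse_Fourier_representation}, once a continuous density exists, the formula is simply Fourier inversion applied to \eqref{eq:Fourier_representation}. To justify it rigorously one needs $\hat{\rho}_\infty \in L^1(\mathbb{R})$, which would follow from decay estimates on the infinite product, but since the statement is conditional on $\rho_\infty$ having a continuous density, invoking the standard inversion theorem suffices. I do not expect any serious obstacle here; the only delicate point is the passage to the limit $N \to \infty$, which rests entirely on the continuity of $\hat{\rho}_\infty$ at the origin — a property that comes for free from the fact that $\rho_\infty$ is a probability measure with compact support.
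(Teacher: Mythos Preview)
Your proposal is correct and follows essentially the same approach as the paper: specialize the Fourier-transformed PDE \eqref{eq:Fourier_PDE} to $m_0=0$, obtain the functional equation $\hat{\rho}_\infty(\xi) = \cos(\mu\xi)\,\hat{\rho}_\infty((1-\mu)\xi)$, iterate, and use $\hat{\rho}_\infty(0)=1$ to pass to the infinite product, then invoke Fourier inversion. Your version is in fact more careful than the paper's (you justify convergence of the product and continuity at the origin explicitly), but the underlying argument is identical.
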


\begin{proof}
Inserting $m_0 = 0$ into the Fourier transformed PDE \eqref{eq:Fourier_PDE} and sending $t \to \infty$, we obtain \[\hat{\rho}_\infty(\xi) = \cos(\mu\,\xi)\,\hat{\rho}_\infty((1-\mu)\,\xi) = \prod_{n=0}^\infty \cos\left(\mu\,(1-\mu)^{n}\,\xi\right)\] via iteration, using the fact that $\hat{\rho}_\infty(0)= 1$ (alternatively, just compute $\EE[\expo^{-i \xi Z_\infty}]$ in \eqref{eq:Zinfty}). As a result, the representation \eqref{eq:inverse_Fourier_representation} for the equilibrium distribution $\rho_\infty$ follows from the celebrated Fourier inversion formula.
\end{proof}

So far we have proven that the stationary distribution $\rho_\infty$ exists, and we obtained (somewhat explicit) general ways of describing it, either via a ``random variable characterization'' as in Lemma \ref{lem:RV_characterization} or through an integral representation as demonstrated in Corollary \ref{cor:integral_representation}. However, as we shall see, the nature of $\rho_\infty$ changes wildly (depending on the specific value of $\mu$), which indicates the existence of the so-called {\it phase transition} phenomenon. In order to investigate $\rho_\infty$ more explicitly, we split our analysis in three cases: $\mu = 1/2$, $\mu>1/2$, and $\mu<1/2$.

\subsection{Case $\mu=1/2$: uniform distribution}

When $\mu = 1/2$ and $m_0 = 0$, the infinite product appearing in \eqref{eq:Fourier_representation} is amenable to explicit evaluation and yields that
\[\hat{\rho}_\infty(\xi) = \prod_{n=1}^\infty \cos\left(\frac{\xi}{2^n}\right) = \frac{\sin \xi}{\xi},\] which coincides with the Fourier transform of the uniform distribution on $[-1,1]$. (On the other hand, as long as $\mu \neq 1/2$, it is prohibitively hard (if possible at
all) to calculate the integral \eqref{eq:inverse_Fourier_representation} in a closed form). We obtain:

\begin{proposition}\label{prop:uniform_equilibrium}
	Assume that $\mu_- = \mu_+ = 1/2$ and that $m_0 = 0$. Then $\mathrm{Uniform}([-1,1])$ is the unique equilibrium solution of the mean-field PDE \eqref{eq:PDE_weak}.
\end{proposition}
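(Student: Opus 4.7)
The plan is to exploit the Fourier-analytic characterization of $\rho_\infty$ provided by Corollary \ref{cor:integral_representation}, together with the classical Vieta-type product formula for $\sin\xi/\xi$. Uniqueness has already been established (as a consequence of the contraction estimates \eqref{eq:expo_decay_pairwise} or \eqref{eq:contraction_W1}), so the task reduces to identifying $\rho_\infty$ explicitly.

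The first step is to insert $\mu = 1/2$ into \eqref{eq:Fourier_representation}, which gives
\[
\hat{\rho}_\infty(\xi) = \prod_{n=0}^\infty \cos\!\left( \tfrac{1}{2}\,(1/2)^n \,\xi\right) = \prod_{n=1}^\infty \cos\!\left(\xi/2^n\right).
\]
Next, I would recall (or quickly derive) the classical identity $\prod_{n=1}^N \cos(\xi/2^n) = \sin\xi/(2^N \sin(\xi/2^N))$, obtained by iterating the double-angle formula $\sin(2\theta) = 2\sin\theta\cos\theta$ starting from $\sin\xi$. Sending $N\to\infty$ and using $2^N \sin(\xi/2^N) \to \xi$ yields
\[
\hat{\rho}_\infty(\xi) = \frac{\sin \xi}{\xi}.
\]

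The third step is simply to recognize this as the Fourier transform of the uniform distribution on $[-1,1]$ (with the convention used in the paper, $\hat{f}(\xi) = \int \expo^{-i x \xi} f(\dd x)$, since the density $\tfrac12 \mathbbm{1}_{[-1,1]}$ has Fourier transform $\sin\xi/\xi$ by a direct computation). By injectivity of the Fourier transform on probability measures, $\rho_\infty = \mathrm{Uniform}([-1,1])$. Finally, uniqueness among stationary solutions follows from Lemma \ref{lem:RV_characterization} (or equivalently from the contraction estimate \eqref{eq:contraction_W1}, which forces any two stationary solutions sharing the same first moment to coincide).

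I do not foresee any real obstacle here: the whole argument hinges on the Vieta telescoping identity, which is elementary, and on the Fourier inversion/injectivity, both of which are standard. The mildly delicate point is to justify carefully that $\hat{\rho}_\infty(\xi) = \sin\xi/\xi$ uniquely characterizes the uniform law on $[-1,1]$ among probability measures supported in $[-1,1]$; this is immediate from the injectivity of the characteristic function, so no analytical subtlety arises.
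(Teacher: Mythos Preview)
Your proof is correct and follows essentially the same approach as the paper: both use the Fourier representation \eqref{eq:Fourier_representation} with $\mu=1/2$, evaluate the resulting infinite product as $\sin\xi/\xi$, and identify this as the characteristic function of $\mathrm{Uniform}([-1,1])$. Your write-up is in fact slightly more detailed, since you spell out the Vieta telescoping argument and the injectivity of the Fourier transform that the paper leaves implicit.
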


In the setting of the Bernoulli convolution, this Proposition corresponds to the fact that $Z_\infty / \mu = \sum_n (1-\mu)^n \mathcal{B}_n$ is uniform on $[-2,2]$ for $\mu=1/2$. Alternatively, the following is a heuristic (but more illustrative) argument as to why $\rho_\infty = \mathrm{Uniform}([-1,1])$. Indeed, when $\mu = 1/2$, the equation in distribution \eqref{eq:characterization_of_equilibrium} simplifies to
\begin{equation}
\label{eq:Zmu05}
Z
\stackrel{\dd}{=}
\tfrac{1}{2}Z + \tfrac{1}{2} \mathcal{B}.
\end{equation}
When $Z \sim \mathrm{Uniform}([-1,1])$, we have $\tfrac{1}{2}Z \sim \mathrm{Uniform}([-\frac{1}{2},\frac{1}{2}])$. The additive term $+\frac{1}{2}\mathcal{B}$ transforms the interval $[-\frac{1}{2},\frac{1}{2}]$ into either $[-1,0]$ or $[0,1]$ with equal probabilities. Thus, $\tfrac{1}{2}Z + \tfrac{1}{2} \mathcal{B}$ again has the distribution $\mathrm{Uniform}([-1,1])$. This compelling intuition is illustrated in Figure \ref{fig:mu=1/2} below.

\begin{figure}[!htb]
  \centering
  \includegraphics[scale = 0.75]{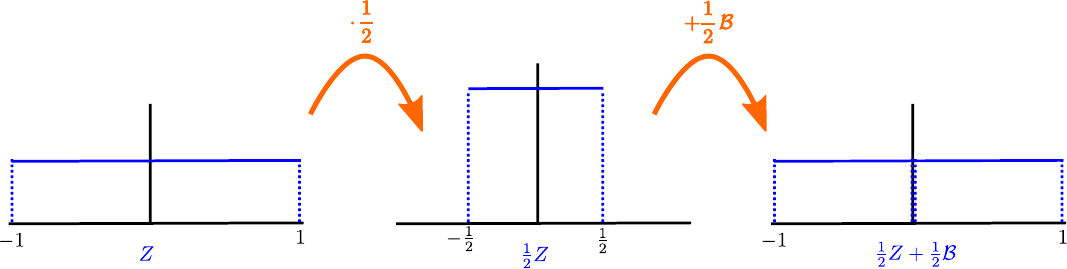}
  \caption{Geometric illustration of the fact that $Z \sim \mathrm{Uniform}([-1,1])$ satisfies the relation \eqref{eq:Zmu05}.}
  \label{fig:mu=1/2}
\end{figure}

\subsection{Case $\mu > 1/2$: emergence of fractal structures}

Now we turn to the case when $\mu \in (1/2, 1)$. In order to determine the random variable $Z_\infty$ from the relation \eqref{eq:characterization_of_equilibrium}, we call the map
\begin{equation}
	\label{eq:procedure}
	Z \mapsto (1-\mu)\,Z + \mu\,\mathcal{B}
\end{equation}
as $\varphi_\mu(Z)$. As we are working with the scenario where $\mu > \frac 12$, it is easy to see that if $\textrm{supp}(Z) \subset [-1,1]$, then the relation \eqref{eq:characterization_of_equilibrium} yields
\[
\textrm{supp}\left((1-\mu)\,Z + \mu\,\mathcal{B}\right) = \textrm{supp}(Z) \subset [-1,1-2\mu] \cup [2\mu-1,1].
\]
Therefore, a recursive argument should lead to the conclusion that $\textrm{supp}(Z_\infty)$ is a Cantor-like set. For instance, in the special case when $\mu = \frac 23$ so that $1-(2\mu-1) = (2\mu-1)-(1-2\mu)$, we expect that $Z_\infty \sim \mathrm{Uniform}(2\cdot \mathcal{C} - 1)$, in which the set $\mathcal{C}$ denotes the classical Cantor ternary set on $[0,1]$. In the general case where $\mu \in (1/2, 1)$ but $\mu \neq 2/3$, the set $2\cdot \mathcal{C} - 1$ will be replaced by the ``limit set'' (denoted by $\mathcal{C}_\infty$) one gets after applying the procedure \eqref{eq:procedure} infinitely many times (starting from a zero mean random variable whose support is contained in $[-1,1]$) and extracting the support of the resulting random variable. See Figure \ref{fig:illustration_map} below for an illustration of this procedure.

\begin{figure}[!htb]
	\centering
	\includegraphics[scale = 0.8]{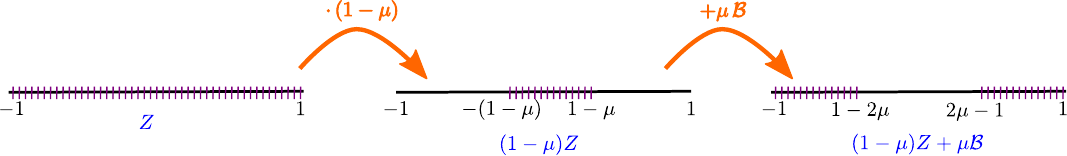}
	\caption{Illustration of the map $\varphi_\mu$ for $\mu > \frac 12$. It is clear that the support of $(1-\mu)\,Z + \mu\,\mathcal{B}$ is a subset of $[-1,1-2\mu] \cup [2\mu-1,1]$ if $\textrm{supp}(Z) \subset [-1,1]$.}
	\label{fig:illustration_map}
\end{figure}

We now turn these intuitive arguments into a rigorous statement, summarized as follows:

\begin{proposition}\label{prop:Cantor-like_equili}
Assume that $\mu \in (1/2,1)$ and $m_0 = 0$. Then, $\rho_\infty = \mathrm{Uniform}(\mathcal{C}_\infty)$, where the (Cantor-like) set $\mathcal{C}_\infty$ is constructed via the following recursive procedure: set $\mathcal{C}_0 = [-1,1]$ and for $n \in \mathbb{N}$ define
\[
\mathcal{C}_{n+1} = \{(1-\mu)\,\mathcal{C}_n - \mu\} \cup \{(1-\mu)\,\mathcal{C}_n + \mu\},
\]
which is a proper closed subset of $\mathcal{C}_n$, and then set $\mathcal{C}_\infty \coloneqq \bigcap_{n\geq 0} \mathcal{C}_n$. The distribution $\mathrm{Uniform}(\mathcal{C}_\infty)$ is understood as the limit of $\mathrm{Uniform}(\mathcal{C}_n)$ when $n\to \infty$. In particular, this means that the equilibrium distribution of the mean-field PDE \eqref{eq:PDE_weak} under the assumptions \eqref{Assumption:A2} and $\mu \in (1/2,1)$ is a singular measure whose support is on a set of Lebesgure measure zero.
\end{proposition}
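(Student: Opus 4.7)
The plan is to use the random-variable characterization $\rho_\infty = \law(Z_\infty)$ from Lemma \ref{lem:RV_characterization} and to run the natural fixed-point iteration from a carefully chosen initial condition so that each iterate is automatically uniform on $\mathcal{C}_n$. Since $m_0 = 0$, the $\mathcal{B}_n$'s are i.i.d.\ symmetric Rademacher variables. Pick $Z_0 \sim \mathrm{Uniform}([-1,1]) = \mathrm{Uniform}(\mathcal{C}_0)$ independent of the $\mathcal{B}_n$'s, and define recursively $Z_{n+1} \coloneqq (1-\mu)\,Z_n + \mu\,\mathcal{B}_n$.

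The heart of the argument is the inductive claim that $\mathcal{C}_n$ is a disjoint union of $2^n$ closed intervals of common length $2(1-\mu)^n$ contained in $[-1,1]$ and that $\law(Z_n) = \mathrm{Uniform}(\mathcal{C}_n)$. The base case $n = 0$ is trivial. For the inductive step, $(1-\mu)\,Z_n$ is uniform on $(1-\mu)\,\mathcal{C}_n \subset [-(1-\mu),1-\mu]$, so that $(1-\mu)\,\mathcal{C}_n - \mu \subset [-1, 1-2\mu]$ and $(1-\mu)\,\mathcal{C}_n + \mu \subset [2\mu-1, 1]$. The hypothesis $\mu > 1/2$ yields $1-2\mu < 0 < 2\mu-1$, so these two translated copies are disjoint; conditioning on $\mathcal{B}_n = \pm 1$ (each with probability $1/2$) then expresses $\law(Z_{n+1})$ as an equal mixture of uniform distributions on two disjoint sets of identical total Lebesgue measure, which coincides with the uniform distribution on their union $\mathcal{C}_{n+1}$. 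The geometric parameters of the induction propagate automatically, and the construction simultaneously confirms that $\mathcal{C}_{n+1}$ is a proper closed subset of $\mathcal{C}_n$.

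To identify the weak limit with $\rho_\infty$, unroll the recursion to obtain $Z_n = (1-\mu)^n Z_0 + \mu \sum_{k=0}^{n-1} (1-\mu)^{n-1-k}\,\mathcal{B}_k$. The first term tends to $0$ almost surely since $|Z_0| \le 1$, while the second summand has the same law as the truncated series $\mu \sum_{k=0}^{n-1}(1-\mu)^k \mathcal{B}_k$, which converges almost surely to $Z_\infty$ by the proof of Lemma \ref{lem:RV_characterization}. Slutsky's theorem then delivers $\mathrm{Uniform}(\mathcal{C}_n) = \law(Z_n) \to \rho_\infty$ weakly, which establishes the identity $\rho_\infty = \mathrm{Uniform}(\mathcal{C}_\infty)$ in the sense of the statement. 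Applying the portmanteau theorem to the open set $\mathcal{C}_n^c$ gives $\rho_\infty(\mathcal{C}_n^c) \le \liminf_m \mathrm{Uniform}(\mathcal{C}_m)(\mathcal{C}_n^c) = 0$ for every $n$, so $\rho_\infty(\mathcal{C}_\infty) = 1$. Finally, $|\mathcal{C}_n| = 2\,(2(1-\mu))^n \to 0$ because $2(1-\mu) < 1$, hence $\mathcal{C}_\infty$ has zero Lebesgue measure and $\rho_\infty$ is singular with respect to it. The only mildly delicate point is the disjointness check inside the induction; once the condition $\mu > 1/2$ is brought to bear, everything else is routine.
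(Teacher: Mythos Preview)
Your proof is correct and follows essentially the same approach as the paper: iterate the map $Z \mapsto (1-\mu)Z + \mu\mathcal{B}$ starting from $\mathrm{Uniform}([-1,1])$, show inductively that the $n$-th iterate is $\mathrm{Uniform}(\mathcal{C}_n)$, and pass to the limit. Your treatment is in fact more careful than the paper's sketch in the Appendix---you spell out the disjointness step that hinges on $\mu>1/2$, identify the limit with $\rho_\infty$ directly via Slutsky (whereas the paper passes through the fixed-point equation \eqref{eq:characterization_of_equilibrium} and uniqueness), and compute $|\mathcal{C}_n| = 2(2(1-\mu))^n$ rather than summing the removed gaps.
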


A technical proof of Proposition \eqref{prop:Cantor-like_equili} in the setting of the Bernoulli convolutions can be found in \cite{kershner_symmetric_1935}, and for the sake of completeness and for the reader's convenience, we present a different and elementary proof in the Appendix.

\begin{remark}
A straightforward computation using self-similarity allows us to compute the Hausdorff dimension of $\mathcal{C}_\infty$ as well, which equals $\ln(2) / \ln\left(1/(1-\mu)\right)$. In particular, when $\mu = 2/3$, one recovers (as expected) the Hausdorff dimension of the standard Cantor set $\mathcal{C}$.
\end{remark}

\subsection{Case $\mu < 1/2$: partial results and asymptotic normality}

Now we turn our attention to the case when $\mu \in (0,1/2)$. Unfortunately, in this scenario we fail to figure out the distribution $\rho_\infty \coloneqq \law(Z_\infty)$ solving \eqref{eq:characterization_of_equilibrium} in general. It turns out for $\mu \in (0,1/2)$ the story is much more complicated and non-trivial. One central question of interest lies in the possibility of proving the absolute continuity of the equilibrium distribution $\rho_\infty$ (with respect to the usual Lebesgue measure on $[-1,1]$). It has been proved among the literature on Bernoulli convolution \cite{solomyak_random_1995} that $\rho_\infty$ is absolutely continuous for a.e. $\mu \in (0,1/2)$. Erd{\"o}s \cite{erdos_family_1939} constructed a countable set of (inverse Pisot) numbers $\mu \in (0,1/2)$ such that $\rho_\infty$ is singular, which remain to be the only known exceptions. On the other hand, a complete characterization of explicit examples of $\mu \in (0,1/2)$ for which $\rho_\infty$ is absolutely continuous still remains open, and so far only very few such explicit examples are known \cite{varju_absolute_2019,wintner_convergent_1935}. In particular, Wintner \cite{wintner_convergent_1935} demonstrated the absolute continuity of $\rho_\infty$ when $\mu$ is of the form $\mu = 1-2^{-1/k}$ for $k\in \mathbb{N}_+$ and Varj{\'u} \cite{varju_absolute_2019} showed the absolute continuity of $\rho_\infty$ when $\mu$ belongs to a class of algebraic numbers satisfying a list of technical conditions.

We aim to report some partial results along this direction. First, we demonstrate the explicit distribution of $Z_\infty$ (under the usual centralized first moment assumption \eqref{Assumption:A2}) for one particular choice of $\mu \in (0,1/2)$. Although this choice of $\mu$ can be handled using the aforementioned general result proved by Wintner \cite{wintner_convergent_1935}, our proof is rather elementary and the underlying geometric intuition is highlighted as well.

\begin{corollary}\label{cor:example_mu}
Assume that $\mu = 1 - 1/\sqrt{2} \approx 0.29289$ and $m_0 = 0$. Then the distribution of $Z_\infty$ satisfying \eqref{eq:characterization_of_equilibrium} is given by the following volcano-shaped density
\begin{equation}\label{eq:rho_equil_example}
\rho_\infty(x) \coloneqq \begin{cases}
\frac{1}{1+r}\,\frac{1+x}{1-r}, ~& -1\leq x\leq -r,\\
\frac{1}{1+r}, ~& -r\leq x \leq r,\\
\frac{1}{1+r}\,\frac{1-x}{1-r}, ~& r\leq x\leq 1,
\end{cases}
\end{equation}
in which $r = 4\,\mu - 1 \approx 0.17157$.
\end{corollary}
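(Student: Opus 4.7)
The plan is to exploit the arithmetic identity $(1-\mu)^2 = 1/2$ that is the defining feature of $\mu = 1 - 1/\sqrt{2}$. Starting from Lemma \ref{lem:RV_characterization}, we have $Z_\infty = \mu \sum_{n=0}^\infty (1-\mu)^n \mathcal{B}_n$, where $(\mathcal{B}_n)_{n\geq 0}$ are i.i.d.\ symmetric $\pm 1$ Rademacher variables (since $m_0 = 0$ forces $p = 1/2$).

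First I would split the infinite series according to the parity of the index:
\[
Z_\infty
= \mu \sum_{k=0}^\infty (1-\mu)^{2k} \mathcal{B}_{2k}
+ \mu(1-\mu) \sum_{k=0}^\infty (1-\mu)^{2k} \mathcal{B}_{2k+1}
= \mu S_1 + \mu(1-\mu) S_2,
\]
where $S_1 \coloneqq \sum_{k\geq 0}(1/2)^k \mathcal{B}_{2k}$ and $S_2 \coloneqq \sum_{k\geq 0}(1/2)^k \mathcal{B}_{2k+1}$ are independent. Each of $S_1$ and $S_2$ has the distribution of (twice) the Bernoulli convolution at parameter $1/2$; by Proposition \ref{prop:uniform_equilibrium} applied with $\mu = 1/2$, each is therefore uniformly distributed on $[-2,2]$.

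Consequently $Z_\infty \stackrel{\dd}{=} V_1 + V_2$, with $V_1 \sim \mathrm{Uniform}([-a,a])$ and $V_2 \sim \mathrm{Uniform}([-b,b])$ independent, where $a \coloneqq 2\mu$ and $b \coloneqq 2\mu(1-\mu)$. Two short algebraic checks specific to the value $\mu = 1 - 1/\sqrt{2}$ give $a + b = 2\mu(2 - \mu) = 1$ and $a - b = 2\mu^2 = 3 - 2\sqrt{2} = r$; equivalently $2a = 1+r$ and $2b = 1-r$.

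Finally, I would invoke the standard formula for the convolution of two independent centered uniforms of unequal widths $a > b > 0$: the resulting density is supported on $[-(a+b),\,a+b]$, equals $1/(2a)$ on the flat top $[-(a-b),\,a-b]$, and decays linearly as $(a+b-|x|)/(4ab)$ on each outer interval. Substituting $a+b = 1$, $a-b = r$, $2a = 1+r$, and $4ab = (1+r)(1-r)$ reproduces exactly the volcano-shaped density \eqref{eq:rho_equil_example}. I do not expect a serious obstacle: the only genuine insight is recognizing that $(1-\mu)^2 = 1/2$ decouples the Bernoulli convolution at parameter $\mu$ into two independent Bernoulli convolutions at parameter $1/2$, after which Proposition \ref{prop:uniform_equilibrium} handles both explicitly and the rest is elementary algebra. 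The geometric picture alluded to in the statement is precisely that $\rho_\infty$ is the convolution (a Minkowski-like sum) of two uniform laws on concentric symmetric intervals of radii $2\mu$ and $2\mu(1-\mu)$, which explains the flat top and linear sides.
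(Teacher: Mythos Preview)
Your proof is correct and takes a genuinely different route from the paper. The paper's proof is a bare verification: it simply states that one checks (by a lengthy computation, omitted) that the volcano density \eqref{eq:rho_equil_example} satisfies $Q^*[\rho_\infty] = 0$, i.e., that $\rho_\infty$ is a fixed point of the self-consistency map $Z \mapsto (1-\mu)Z + \mu\mathcal{B}$. The accompanying figure depicts this self-consistency as the superposition of two shifted, rescaled copies of the same volcano.

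Your argument is instead \emph{constructive}: you exploit the algebraic identity $(1-\mu)^2 = 1/2$ to split the series \eqref{eq:Zinfty} by parity of the index, obtaining $Z_\infty$ as the sum of two independent scaled copies of the Bernoulli convolution at parameter $1/2$, which is uniform by Proposition~\ref{prop:uniform_equilibrium}. The density then drops out as a convolution of two centered uniforms of radii $2\mu$ and $2\mu(1-\mu)$, and the identities $a+b=1$, $a-b=r$ close the computation. This approach explains \emph{why} this specific $\mu$ admits a closed form (it is the case $k=2$ of Wintner's family $\mu = 1 - 2^{-1/k}$, where the same parity-type splitting into $k$ independent uniforms works), and it derives the density rather than merely checks it. The paper's verification, by contrast, requires no prior knowledge of the series representation but gives less structural insight.
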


\begin{proof} The proof follows merely from a straightforward computation (albeit lengthy and tedious) that we will omit, all we need to check is that the density function \eqref{eq:rho_equil_example} indeed satisfies $Q^*[\rho_\infty] = 0$ \eqref{eq:operator_Q*} when $\mu = 1 - 1/\sqrt{2}$ and $m_t \equiv 0$. The essential geometric intuition is illustrated in Figure \ref{fig:example_volcano}.

\begin{figure}[!htb]
  \centering
  \includegraphics[scale = 0.75]{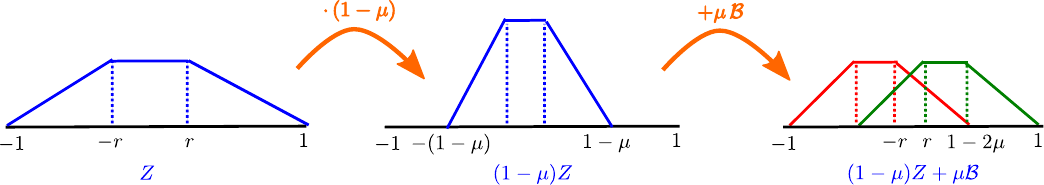}
  \caption{Geometric sketch of the observation that $Z \sim \rho_\infty$ \eqref{eq:rho_equil_example} fulfills the relation \eqref{eq:characterization_of_equilibrium} when $\mathbb{E}[Z] = 0$, $\mu = 1 - 1/\sqrt{2}$, and $r = 4\,\mu - 1$. The key idea lies in the proper choice of $\mu$ and $r$ so that the ``concatenation'' of two Volcano-shaped distributions yields the target Volcano-shaped density $\rho_\infty$ defined by \eqref{eq:rho_equil_example}.}
  \label{fig:example_volcano}
\end{figure}

In order for the designed density \eqref{eq:rho_equil_example} to be the distribution of $Z_\infty$ satisfying \eqref{eq:characterization_of_equilibrium}, we deduce the following conditions on $r$ and $\mu$:
\[
r\,(1-\mu) + \mu = 1 - 2\,\mu \quad \textrm{and} \quad -r\,(1-\mu) + \mu = r.
\]
Solving this system leads us to $\mu = 1 - 1/\sqrt{2}$ and $r = 4\,\mu - 1$.
\end{proof}

Next, we present several numerical experiments of the opinion dynamics when the parameters are chosen such that $\mu \in (0,1/2)$ and $m_0 = 0$, from the (stochastic) agent-based point of view along with the (mean-field) PDE perspective (when the number of agents tends to infinity), see Figure \ref{fig:numerics}. For the simulation results reported below, we always use the uniform distribution on $[-1,1]$ as the initial datum. We demonstrate the agent-based simulation results with $N = 5\cdot 10^6$ agents for three cases: $\mu = 0.25$, $\mu = 1 - 1/\sqrt{2}$, and $\mu = 0.4$. In each case, we display the histogram of the $N$ agents, scaled vertically to approximate the density. On the other hand, we display the evolution of the numerical solution $\rho_t$ of the Boltzmann-type equation \eqref{eq:PDE} at various time instants, using the standard fourth-order Runge-Kutta scheme with time step $\Delta t = 0.01$ and spatial discretization $\Delta x = 0.0001$, for the same three values of $\mu$, side-by-side to the agent-based simulation results. In both cases, we ran the simulation up to time $t=20$, which seems enough to have reached equilibrium. It can be observed that the outcomes of the agent-based simulations (at equilibrium) agree well with their mean-field counterpart (predicted by the equilibrium solution of the Boltzmann-type PDE \eqref{eq:PDE}).

\def\figscale{0.58}
\def\figwidth{0.47\textwidth}
\begin{figure}[!htb]
	\begin{subfigure}{\figwidth}
		\centering
		\includegraphics[scale=\figscale]{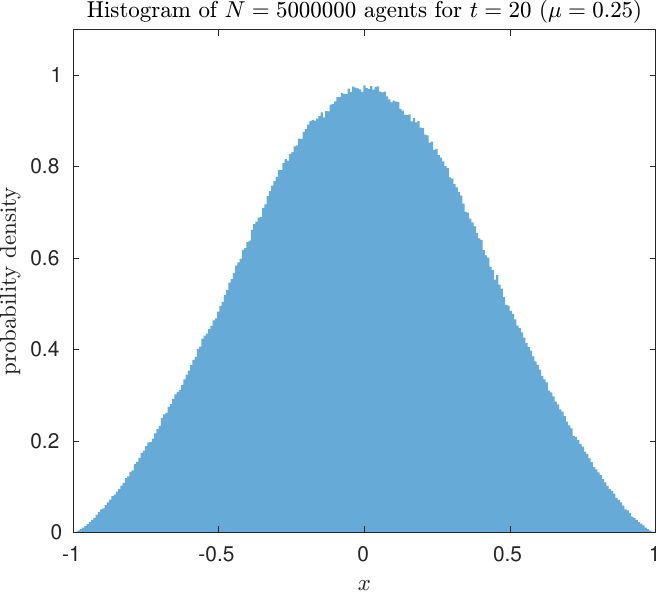}
	\end{subfigure}
	\hspace{0.1in}
	\begin{subfigure}{\figwidth}
		\centering
		\includegraphics[scale=\figscale]{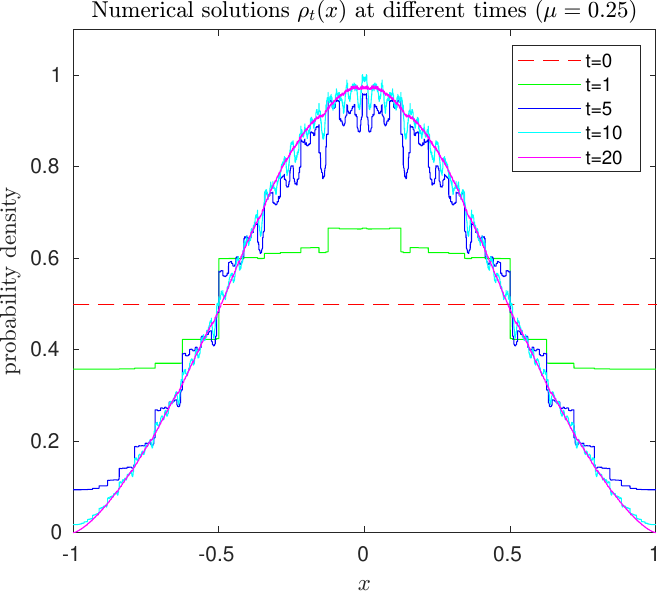}
	\end{subfigure}
	\\
	\begin{subfigure}{\figwidth}
		\centering
		\includegraphics[scale=\figscale]{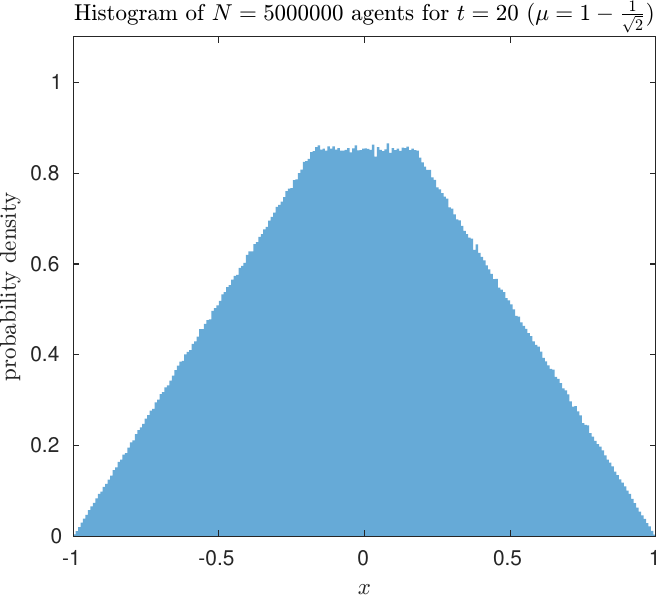}
	\end{subfigure}
	\hspace{0.1in}
	\begin{subfigure}{0.45\textwidth}
		\centering
		\includegraphics[scale=\figscale]{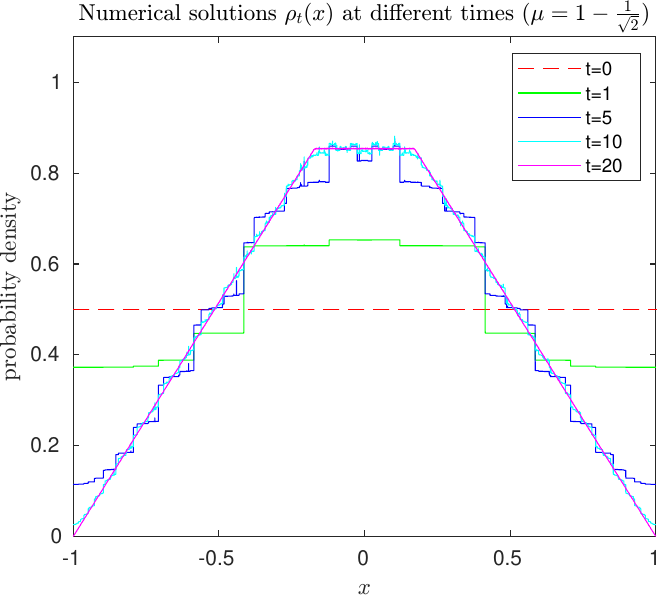}
	\end{subfigure}
    \\
    \begin{subfigure}{\figwidth}
    	\centering
    	\includegraphics[scale=\figscale]{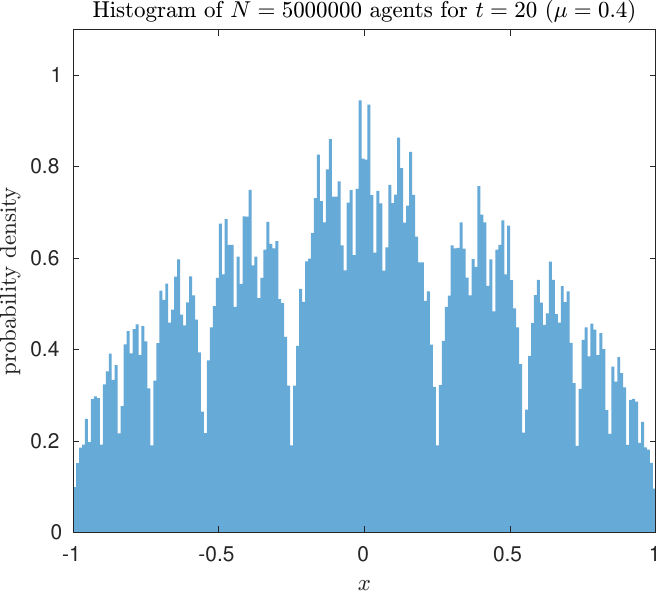}
    \end{subfigure}
    \hspace{0.1in}
    \begin{subfigure}{\figwidth}
    	\centering
    	\includegraphics[scale=\figscale]{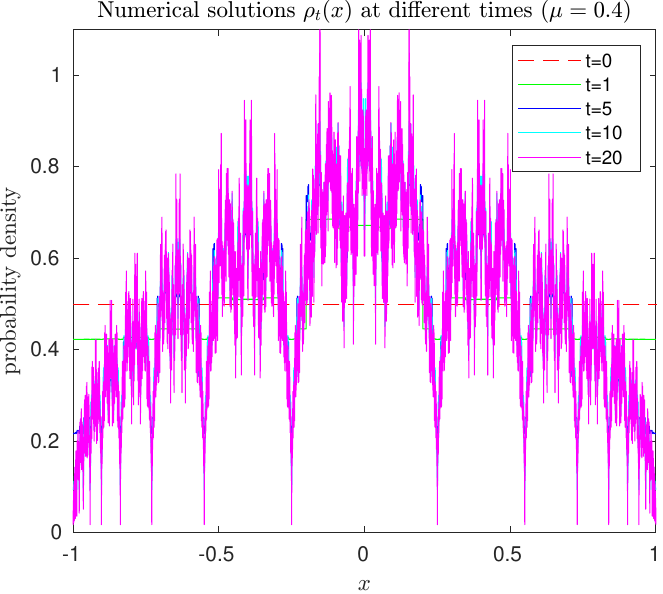}
    \end{subfigure}
	
	\caption{{\bf Left}: Simulations of agent-based model with $N = 5\cdot 10^6$ agents, and $\mu = 0.25$ (top), $\mu=1-1/\sqrt{2}$ (middle), $\mu = 0.4$ (bottom). {\bf Right}: Evolution of the solution $\rho_t$ to the Boltzmann-type equation \eqref{eq:PDE} with respect to time, again with and $\mu = 0.25$ (top), $\mu=1-1/\sqrt{2}$ (middle), $\mu = 0.4$ (bottom).}
	\label{fig:numerics}
\end{figure}

We now claim that for $\mu \ll 1$ small enough, the equilibrium distribution $\rho_\infty$ resembles a Gaussian density. More precisely, after normalizing $Z_\infty$ by its standard deviation $\sigma = \sqrt{\mu/(2-\mu)}$, we provide a quantitative convergence result showing that the law of $Z_\infty/\sigma$ converges to the standard Gaussian density as $\mu \to 0$.

\begin{theorem}\label{prop:Gaussian_approximation}
Assume that $m_0 = 0$. Then, $Z_\infty / \sigma$ converges in distribution to the standard Gaussian $\mathcal{N}(0,1)$ as $\mu \to 0$. Moreover, there exists some constant $C > 0$ (independent of $\mu$) such that
\begin{equation}\label{eq:d4_bound}
d_4\left(\textrm{Law}(Z_\infty/\sigma),\mathcal{N}(0,1)\right) \leq C\,\mu
\end{equation}
for all small enough $\mu$.
\end{theorem}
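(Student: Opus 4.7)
The plan is to work directly in the Fourier domain via the product representation from Corollary~\ref{cor:integral_representation}. Set $\sigma^2 = \Var[Z_\infty] = \mu/(2-\mu)$ and introduce the weights $a_n \coloneqq \mu(1-\mu)^n/\sigma$ for $n \geq 0$. By rescaling, the characteristic function of $Z_\infty/\sigma$ is
\[
\psi(\xi) \coloneqq \hat{\rho}_\infty(\xi/\sigma) = \prod_{n=0}^\infty \cos(a_n \xi).
\]
The crucial bookkeeping observation is that $\sum_{n=0}^\infty a_n^2 = (\mu^2/\sigma^2)/(1-(1-\mu)^2) = 1$ (using the explicit form of $\sigma^2$), so the Gaussian characteristic function admits the matching infinite product $\expo^{-\xi^2/2} = \prod_{n=0}^\infty \expo^{-a_n^2\xi^2/2}$. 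The strategy is then to compare $\psi$ to $\expo^{-\xi^2/2}$ factor-by-factor.

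Next, I would establish the uniform pointwise estimate
\[
|\cos y - \expo^{-y^2/2}| \leq C_0\, y^4 \qquad \text{for all } y \in \R,
\]
with $C_0$ an absolute constant. For $|y|\leq 1$ this follows from the Taylor expansion $\cos y - \expo^{-y^2/2} = -\tfrac{1}{12} y^4 + O(y^6)$, while for $|y|\geq 1$ the inequality is trivial since the left-hand side is bounded by $2 \leq 2 y^4$. Combined with the telescoping inequality $|\prod_n A_n - \prod_n B_n| \leq \sum_n |A_n - B_n|$, valid whenever $|A_n|,|B_n|\leq 1$ and applied here with $A_n = \cos(a_n\xi)$ and $B_n = \expo^{-a_n^2\xi^2/2}$, this yields
\[
|\psi(\xi) - \expo^{-\xi^2/2}| \leq C_0\, \xi^4 \sum_{n=0}^\infty a_n^4.
\]

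To conclude, a direct geometric-series computation gives
\[
\sum_{n=0}^\infty a_n^4 = \frac{\mu^2(2-\mu)^2}{1-(1-\mu)^4},
\]
and since $1 - (1-\mu)^4 = \mu(4-6\mu+4\mu^2-\mu^3)$, this quantity is bounded by $C\mu$ for all sufficiently small $\mu$. Dividing the previous bound by $|\xi|^4$ and taking the supremum over $\xi \neq 0$ then produces \eqref{eq:d4_bound}. The convergence in distribution of $Z_\infty/\sigma$ to $\mathcal{N}(0,1)$ as $\mu\to 0$ follows immediately from pointwise convergence of characteristic functions, which is implied by the quantitative bound, via Lévy's continuity theorem.

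The main (mild) obstacle is the uniform pointwise estimate $|\cos y - \expo^{-y^2/2}| \leq C_0 y^4$: one must patch together a Taylor-based bound for small $|y|$ with the crude bound $\leq 2$ for large $|y|$, and verify in both regimes that $y^4$ dominates with a single absolute constant. Everything else amounts to Lindeberg-style bookkeeping, reflecting that the largest weight $a_0 = \sqrt{\mu(2-\mu)}$ vanishes as $\mu\to 0$, so no individual Rademacher variable $\mathcal{B}_n$ can carry enough mass to destroy Gaussian behaviour in the limit.
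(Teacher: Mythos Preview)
Your proof is correct but takes a genuinely different route from the paper's. The paper passes to logarithms, using $\ln\cos(x) = -x^2/2 + O(x^4)$ to obtain $\ln\hat f(\xi) + \xi^2/2 = \xi^4 \cdot O(\mu)$; it then splits the supremum defining $d_4$ at a cutoff $R$, bounds $|\hat f(\xi)-\expo^{-\xi^2/2}| = \expo^{-\xi^2/2}|\expo^{\ln\hat f(\xi)+\xi^2/2}-1| \leq C\expo^{CR^4}\xi^4$ on $|\xi|\leq R$, uses the trivial bound $2/R^4$ on $|\xi|>R$, and optimizes by taking $R = C^{-1/4}$. You instead write $\expo^{-\xi^2/2}$ as the matching infinite product $\prod_n \expo^{-a_n^2\xi^2/2}$ and compare factor-by-factor via telescoping, relying on the uniform pointwise bound $|\cos y - \expo^{-y^2/2}|\leq C_0 y^4$ valid for \emph{all} real $y$. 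Your approach is cleaner: it avoids logarithms (hence the need to ensure $\hat f(\xi)>0$ on the relevant range) and dispenses with the cutoff-and-optimize step entirely, while making the Lindeberg mechanism explicit. The paper's logarithmic route is closer in spirit to standard Toscani-distance computations in kinetic theory, but yours is shorter and more elementary.
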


\begin{proof}
Call $f = f_\mu = \law(Z_\infty/\sigma)$. Since $Z_\infty$ is given by \eqref{eq:Zinfty}, a similar computation as in the derivation of
\eqref{eq:Fourier_representation} gives rise to
\[\hat{f}(\xi)
= \prod_{n=0}^\infty \cos\left(\sqrt{\mu\,(2-\mu)}\,(1-\mu)^n\,\xi\right).
\]
Therefore, in order to show that $f$ converges to the standard Gaussian density as $\mu \to 0$, it suffices to prove that
\begin{equation}\label{eq:convergence_ln_Fourier}
\ln \hat{f}(\xi) = \sum\limits_{n = 0}^\infty \ln \cos\left(\sqrt{\mu\,(2-\mu)}\,(1-\mu)^n \,\xi\right) \xrightarrow{\mu \to 0} -\frac{\xi^2}{2}.
\end{equation}
Thanks to the Taylor expansion of the function $x \mapsto \ln\cos(x)$ around $x = 0$, i.e., $\ln\cos(x) = -\frac{x^2}{2} + \mathcal{O}(x^4)$, we deduce for $\mu \ll 1$ that
\begin{align*}
\sum_{n = 0}^\infty \ln \cos\left(\sqrt{\mu\,(2-\mu)}\,(1-\mu)^n\,\xi\right)
&= -\frac{\xi^2}{2}\,\mu\,(2-\mu)\,\sum_{n = 0}^\infty (1-\mu)^{2n} \\
&\qquad + \xi^4 \,\mathcal{O}\left(\mu^2\,(2-\mu)^2 \sum_{n = 0}^\infty (1-\mu)^{4n} \right) \\
&= -\frac{\xi^2}{2} + \xi^4\,\mathcal{O}\left(\frac{\mu^2\,(2-\mu)^2}{1-(1-\mu)^4}\right).
\end{align*}
Consequently, the advertised asymptotic behavior \eqref{eq:convergence_ln_Fourier} is established by noting that $\frac{\mu^2\,(2-\mu)^2}{1-(1-\mu)^4} = \mathcal{O}(\mu)$ for small enough $\mu$ (using L'H\^ospital's rule).

We now proceed to the proof of the quantitative estimate \eqref{eq:d4_bound} using the Toscani distance $d_4$. We denote by $\mathcal{N}(x)$ the density function of the standard Gaussian random variable and note that its Fourier transform is given by $\hat{\mathcal{N}}(\xi) = \expo^{-\xi^2/2}$ for all $\xi \in \mathbb R$. We recall that
\[
d_4(f,\mathcal{N}) = \sup\limits_{\xi \in \mathbb{R}\setminus \{0\}} \frac{|\hat{f}(\xi)-\hat{\mathcal{N}}(\xi)|}{|\xi|^4}.
\]
Since $|\hat{f}(\xi)| \leq 1$ and $|\hat{\mathcal{N}}(\xi)|\leq 1$ for all $\xi$, for an arbitrary but fixed $R > 0$ (whose value remains to be determined) we have
\begin{equation}\label{eq:d4_bound2}
d_4(f,\mathcal{N}) \leq \sup\limits_{\xi \in [-R,R] \setminus \{0\}} \frac{|\hat{f}(\xi)-\hat{\mathcal{N}}(\xi)|}{|\xi|^4} + \frac{1}{R^4}.
\end{equation}
Now we bound $|\hat{f}(\xi)-\hat{\mathcal{N}}(\xi)|$ as
\begin{equation*}
|\hat{f}(\xi)-\hat{\mathcal{N}}(\xi)| = \left|\expo^{\ln \hat{f}(\xi) + \frac{\xi^2}{2}} - 1\right|\,\expo^{-\frac{\xi^2}{2}} \leq \left|\expo^{C\,\xi^4} - 1\right| \leq C\,\expo^{C\,R^4}\,\xi^4
\end{equation*}
in which $\xi \in [-R,R]$ and $C = C_\mu = \mathcal{O}\left(\frac{\mu^2\,(2-\mu)^2}{1-(1-\mu)^4}\right) = \mathcal{O}(\mu)$ for all small enough $\mu$. Therefore, we deduce from \eqref{eq:d4_bound2} that
\[
d_4(f,\mathcal{N}) \leq C\,\expo^{C\,R^4} + \frac{1}{R^4} \leq 4\,C
\]
by choosing $R = C^{-\frac 14}$. This completes the proof.
\end{proof}

\section{Conclusion and future work}\label{sec:sec5}
\setcounter{equation}{0}

In this manuscript, we proposed and analyzed a novel opinion-dynamics model in the mean-field regime, i.e., as the number of agents $N$ tends to infinity. We proved several quantitative convergence guarantees for the solution of the mean-field Boltzmann-type PDE to its unique equilibrium distribution, and we demonstrated that this distribution of opinions depends heavily on the model parameters. Surprisingly, for certain regions of parameter choice we managed to prove the emergence of Cantor-like fractal structures, which provide a mathematically rigorous explanation for the so-called opinion fragmentation phenomenon. Our model also bridges interacting multi-agents systems and kinetic-type PDEs with the Bernoulli convolution, which indicates its inter-disciplinary nature and suitability for extensive subsequent research efforts. We provided numerical simulation, both for the stochastic agent-based system and the mean-field Boltzmann-type PDE.

The present article also leaves many important problems to be addressed in future works. First, can we employ tools from evolution PDEs to offer a potential alternative proof to a number of technical results on Bernoulli convolutions? For instance, we believe that it would be very interesting (perhaps also rather challenging) if one can prove the absolute continuity of $\rho_\infty$ for a.e. $\mu_+ = \mu_{-} \in (0,1/2)$ purely from a PDE viewpoint, without resorting to advanced probabilistic and information-theoretic techniques. Second, we aim to analyze our opinion model in the case where $\mu_+ = \mu_{-}$ and $m_0 \neq 0$ in a future paper, and explore the influence of the non-zero initial average opinion (where no symmetry around the space of admissible opinions $[-1,1]$ is maintained anymore) on the shape of the equilibrium distribution of opinions. Lastly, it would be very desirable to prove \emph{uniform} propagation of chaos for this model, i.e., with explicit convergence rates in $N$ that do not depend on time. Simulations of the multi-agent system presented in this work seem to support this conjecture.

\section*{Appendix}
\setcounter{equation}{0}

\subsection*{Proof of Proposition \ref{prop:Cantor-like_equili}}

\begin{proof}
The proof follows readily from the construction process of the set $\mathcal{C}_\infty$. Indeed, let $\mathcal{C}_n$ denote the set obtained at the $n$-stage of the construction of $\mathcal{C}_\infty$ and call $\lambda_n = \mathrm{Uniform}(\mathcal{C}_n)$ be the uniform distribution on $\mathcal{C}_n$, then $\lambda_\infty \coloneqq \mathrm{Uniform}(\mathcal{C}_\infty) = \lim_{n \to \infty} \lambda_n$ where the convergence is understood in the sense of distribution. Now, taking $Z_n \sim \lambda_n$ for each $n \in \mathbb N$ so that $Z_\infty \sim \lambda_\infty$ is the limit of $Z_n$ as $n\to \infty$, we can easily see that the law of $(1-\mu)\,Z_n + \mu\,\mathcal{B}$ coincides with $\lambda_{n+1}$, hence in a distribution sense, \[\lim_{n \to \infty} \textrm{Law}(Z_n) = \lim_{n \to \infty} \textrm{Law}\left((1-\mu)\,Z_n + \mu\,\mathcal{B}\right). \] This implies that $Z_\infty$ satisfies the relation \eqref{eq:characterization_of_equilibrium}. In order to show that the set $\mathcal{C}_\infty$ has zero Lebesgue measure (denoted by $\lambda$), it suffices to notice that
\begin{equation*}
\begin{aligned}
\lambda(\mathcal{C}_\infty) &= \lambda([-1,1]) - \left[2\,(2\,\mu-1) + 4\,(1-\mu)\,(2\,\mu-1) + 8\,(1-\mu)^2\,(2\,\mu-1) + \cdots\right] \\
&= 2 - (2\,\mu-1)\,\sum\limits_{n=1}^\infty 2^n\,(1-\mu)^{n-1} = 0.
\end{aligned}
\end{equation*}
Thus the proof of Proposition \ref{prop:Cantor-like_equili} is completed.
\end{proof}


\begin{thebibliography}{99}



\bibitem{cao_derivation_2021}
Fei Cao, and Sebastien Motsch.
\newblock Derivation of wealth distributions from biased exchange of money.
\newblock {\em Kinetic \& Related Models}, \textbf{16}(5):764--794, 2023.

\bibitem{cao_entropy_2021}
Fei Cao, Pierre-Emannuel Jabin, and Sebastien Motsch.
\newblock Entropy dissipation and propagation of chaos for the uniform reshuffling model.
\newblock {\em Mathematical Models and Methods in Applied Sciences}, \textbf{33}(4):829--875, 2023.

\bibitem{cao_equivalence_2024}
Fei Cao, and Xiaoqian Gong.
\newblock On the equivalence between Fourier-based and Wasserstein distances for probability measures on $\mathbb N$.
\newblock {\em arXiv preprint arXiv:2404.04499}, 2024.

\bibitem{cao_explicit_2021}
Fei Cao.
\newblock Explicit decay rate for the Gini index in the repeated averaging model.
\newblock {\em Mathematical Methods in the Applied Sciences}, \textbf{46}(4):3583--3596, 2023.

\bibitem{cao_interacting_2022}
Fei Cao, and Pierre-Emannuel Jabin.
\newblock From interacting agents to Boltzmann-Gibbs distribution of money.
\newblock {\em arXiv preprint arXiv:2208.05629}, 2022.

\bibitem{cao_k_2021}
Fei Cao.
\newblock $K$-averaging agent-based model: propagation of chaos and convergence to equilibrium.
\newblock {\em Journal of Statistical Physics}, \textbf{184}(2):18, 2021.

\bibitem{cao_iterative_2024}
Fei Cao, and Stephanie Reed.
\newblock The iterative persuasion-polarization opinion dynamics and its mean-field analysis.
\newblock {\em arXiv preprint arXiv:2408.00148}, 2024.

\bibitem{castellano_statistical_2009}
Claudio Castellano, Santo Fortunato, and Vittorio Loreto.
\newblock Statistical physics of social dynamics.
\newblock {\em Reviews of modern physics}, \textbf{81}(2):591, 2009.

\bibitem{cao_uncovering_2022}
Fei Cao, and Sebastien Motsch.
\newblock Uncovering a two-phase dynamics from a dollar exchange model with bank and debt.
\newblock {\em SIAM Journal on Applied Mathematics}, \textbf{83}(5):1872--1891, 2023.

\bibitem{cao_uniform_2024}
Fei Cao, and Roberto Cortez.
\newblock Uniform propagation of chaos for a dollar exchange econophysics model.
\newblock {\em European Journal of Applied Mathematics}, 1--13, 2024.

\bibitem{carrillo_contractive_2007}
Jos{\'e} A. Carrillo, and Giuseppe Toscani.
\newblock Contractive probability metrics and asymptotic behavior of dissipative kinetic equations.
\newblock \emph{Riv. Mat. Univ. Parma}, \textbf{6}(7):75--198, 2007.

\bibitem{clifford_model_1973}
Peter Clifford, and Aidan Sudbury.
\newblock A model for spatial conflict.
\newblock {\em Biometrika}, \textbf{60}(3):581--588, 1973.

\bibitem{cortez_quantitative_2016}
Roberto Cortez, and Joaquin Fontbona.
\newblock Quantitative propagation of chaos for generalized Kac particle systems.
\newblock {\em The Annals of Applied Probability}, \textbf{26}(2):892--916, 2016.

\bibitem{cortez_uniform_2016}
Roberto Cortez.
\newblock Uniform propagation of chaos for Kac’s 1D particle system.
\newblock {\em Journal of Statistical Physics}, \textbf{165}:1102--1113, 2016.


\bibitem{deffuant_mixing_2000}
Guillaume Deffuant, David Neau, Frederic Amblard, and G{\'e}rard Weisbuch.
\newblock Mixing beliefs among interacting agents.
\newblock {\em Advances in Complex Systems}, \textbf{3}(01n04):87--98, 2000.

\bibitem{degond_macroscopic_2004}
Pierre Degond.
\newblock Macroscopic limits of the Boltzmann equation: a review.
\newblock {\em Modeling and computational methods for kinetic equations}, pp. 3--57, 2004.

\bibitem{during_boltzmann_2008}
Bertram D{\"u}ring, Daniel Matthes, and Giuseppe Toscani.
\newblock A Boltzmann-type approach to the formation of wealth distribution curves.
\newblock \emph{Available at SSRN 1281404}, 2008.

\bibitem{erdos_family_1939}
Paul Erd{\"o}s.
\newblock On a family of symmetric Bernoulli convolutions.
\newblock {\em American Journal of Mathematics}, \textbf{61}(4):974--976, 1939.

\bibitem{etheridge_some_2011}
Alison Etheridge.
\newblock Some Mathematical Models from Population Genetics: {\'E}cole D'{\'E}t{\'e} de Probabilit{\'e}s de Saint-Flour XXXIX-2009.
\newblock \emph{Springer Science \& Business Media}, Vol. 2012, 2011.


\bibitem{gabetta_metrics_1995}
G. Gabetta, Giuseppe Toscani, and Bernt Wennberg.
\newblock Metrics for probability distributions and the trend to equilibrium for solutions of the Boltzmann equation.
\newblock \emph{Journal of statistical physics}, \textbf{81}:901--934, 1995.

\bibitem{goudon_fourier_2002}
Thierry Goudon, St{\'e}phane Junca, and Giuseppe Toscani.
\newblock Fourier-based distances and Berry-Esseen like inequalities for smooth densities.
\newblock \emph{Monatshefte f{\"u}r Mathematik}, \textbf{135}:115--136, 2002.

\bibitem{graham_meleard_1997}
Carl Graham, and Sylvie M{\'e}l{\'e}ard.
\newblock Stochastic particle approximations for generalized Boltzmann models and convergence estimates.
\newblock \emph{The Annals of Probability}, \textbf{25}(1):115--132, 1997.

\bibitem{hegselmann_opinion_2002}
Rainer Hegselmann, and Ulrich Krause.
\newblock Opinion dynamics and bounded confidence models, analysis, and simulation.
\newblock \emph{Journal of artificial societies and social simulation}, \textbf{5}(3), 2002.

\bibitem{holley_ergodic_1975}
Richard A. Holley, and Thomas M. Liggett.
\newblock Ergodic theorems for weakly interacting infinite systems and the voter model.
\newblock {\em The Annals of Probability}, \textbf{3}(4):643--663, 1975.

\bibitem{jabin_clustering_2014}
Pierre-Emmanuel Jabin, and Sebastien Motsch.
\newblock Clustering and asymptotic behavior in opinion formation.
\newblock {\em Journal of Differential Equations}, \textbf{257}(11):4165--4187, 2014.

\bibitem{kershner_symmetric_1935}
Richard Kershner, and Aurel Wintner.
\newblock On symmetric Bernoulli convolutions.
\newblock {\em American Journal of Mathematics}, \textbf{57}(3):541--548, 1935.

\bibitem{lanchier_stochastic_2017}
Nicolas Lanchier.
\newblock Stochastic modeling.
\newblock Berlin: Springer, 2017.

\bibitem{lanchier_limiting_2024}
Nicolas Lanchier, and Max Mercer.
\newblock Limiting behavior of a kindness model.
\newblock {\em arXiv preprint arXiv:2402.02579}, 2024.

\bibitem{liggett_interacting_1985}
\newblock Thomas Milton Liggett, and Thomas M. Liggett.
\newblock Interacting particle systems.
\newblock New York: Springer, 1985.

\bibitem{matthes_steady_2008}
Daniel Matthes, and Giuseppe Toscani.
\newblock On steady distributions of kinetic models of conservative economies.
\newblock {\em Journal of Statistical Physics}, \textbf{130}(6):1087--1117, 2008.

\bibitem{meng_disagreement_2023}
Fanyuan Meng, Jiadong Zhu, Yuheng Yao, Enrico Maria Fenoaltea, Yubo Xie, Pingle Yang, Run-Ran Liu, and Jianlin Zhang.
\newblock Disagreement and fragmentation in growing groups.
\newblock {\em Chaos, Solitons \& Fractals}, \textbf{167}:113075, 2023.

\bibitem{naldi_mathematical_2010}
Giovanni Naldi, Lorenzo Pareschi, and Giuseppe Toscani.
\newblock Mathematical modeling of collective behavior in socio-economic and life sciences.
\newblock \emph{Springer Science \& Business Media}, 2010.

\bibitem{sen_sociophysics_2014}
Parongama Sen, and Bikas K. Chakrabarti.
\newblock Sociophysics: an introduction.
\newblock OUP Oxford, 2014.

\bibitem{solomyak_random_1995}
Boris Solomyak.
\newblock On the random series $\sum \pm \lambda^n$ (an Erd{\"o}s problem).
\newblock {\em Annals of Mathematics}, \textbf{142}(3):611--625, 1995.

\bibitem{su_robust_2019}
Wei Su, Jin Guo, Xianzhong Chen, Ge Chen, and Jiangyun Li.
\newblock Robust fragmentation modeling of Hegselmann--Krause-type dynamics.
\newblock {\em Journal of the Franklin Institute}, \textbf{356}(16):9867--9880, 2019.

\bibitem{sznajd_opinion_2000}
Katarzyna Sznajd-Weron, and Jozef Sznajd.
\newblock Opinion evolution in closed community.
\newblock {\em International Journal of Modern Physics C}, \textbf{11}(06):1157--1165, 2000.

\bibitem{sznitman_topics_1991}
Alain-Sol Sznitman.
\newblock Topics in propagation of chaos.
\newblock In {\em Ecole d'été de probabilités de {Saint}-{Flour}
  {XIX}—1989}, pages 165--251. Springer, 1991.

\bibitem{varju_recent_2016}
P{\'e}ter P. Varj{\'u}.
\newblock Recent progress on Bernoulli convolutions.
\newblock {\em arXiv preprint arXiv:1608.04210}, 2016.

\bibitem{varju_absolute_2019}
P{\'e}ter P. Varj{\'u}.
\newblock Absolute continuity of Bernoulli convolutions for algebraic parameters.
\newblock {\em Journal of the American Mathematical Society}, \textbf{32}(2):351--397, 2019.

\bibitem{weber_deterministic_2019}
Dylan Weber, Ryan Theisen, and Sebastien Motsch.
\newblock Deterministic versus stochastic consensus dynamics on graphs.
\newblock \emph{Journal of statistical physics}, \textbf{176}:40--68, 2019.

\bibitem{wintner_convergent_1935}
Aurel Wintner.
\newblock On convergent Poisson convolutions.
\newblock {\em American Journal of Mathematics}, \textbf{57}(4):827--838, 1935.


\end{thebibliography}
\end{document}